\documentclass[11pt,reqno]{amsart}
\usepackage{amsmath,amsthm,amssymb,mathrsfs}
\usepackage[abbrev,non-sorted-cites]{amsrefs}
\usepackage{hyperref}
\usepackage{color,graphicx}
\hypersetup{
  pdftitle={The Exact Comass Criterion and Calibrated Planes for the Tsai–Wang Form},
  pdfsubject={Exact comass, higher-rank sufficiency, and calibrated planes for Tsai-Wang Form},
  pdfkeywords={calibration, calibrated planes, comass, equality cases, minimal graph, product inequality, singular values}
}

\theoremstyle{plain}
\newtheorem{thm}{Theorem}[section]
\newtheorem{lem}[thm]{Lemma}
\newtheorem{prop}[thm]{Proposition}

\theoremstyle{definition}

\newtheorem{rmk}[thm]{Remark}
\newtheorem{ex}[thm]{Example}
\newtheorem*{AIuse}{AI Usage Statement}
\newtheorem*{ackn}{Acknowledgments}
\newtheorem*{assumption}{Assumption}
\numberwithin{equation}{section}

\newcommand\pl{\partial}

\newcommand\w{\wedge}
\newcommand\ld{\lambda}
\newcommand\Om{\mathcal{D}}
\newcommand\Ta{\Theta}
\newcommand\CS{\mathcal{S}}
\newcommand\CC{\mathcal C}
\newcommand\BR{\mathbb{R}}
\newcommand\dd{{\mathrm d}}
\newcommand\RGr{\mathrm{Gr}}
\newcommand\sdf{\mathrm{I}\!\mathrm{I}}
\DeclareMathOperator{\tr}{tr}
\DeclareMathOperator{\vol}{vol}
\DeclareMathOperator{\spn}{span}
\DeclareMathOperator{\rank}{rank}
\DeclareMathOperator{\graph}{graph}

\title{The Exact Comass Criterion\\for the Tsai--Wang Form}
\subjclass{Primary 53C38; Secondary 53A10, 49Q05}
\author{Wei-Chun Chen}
\address{Department of Mathematics, National Taiwan University, Taipei 10617, Taiwan}
\email{b14201028@ntu.edu.tw}

\author{Chun-Kai Lien}
\address{Department of Mathematics, National Taiwan University, Taipei 10617, Taiwan}
\email{d13221002@ntu.edu.tw}

\begin{document}

\begin{abstract}
For a map $F:\Om\subset\BR^n\to\BR^m$, let $\Ta(F)$ be the $n$-form on $\Om\times\BR^m$ introduced by Tsai and Wang \cite{TsaiWang}.  If $\ld_1,\ldots,\ld_{r}$ are the nonzero
singular values of $\dd F$ at any fixed point, we prove that
\[
 \Ta(F) \text{ has comass one if and only if }~
 \CS(\ld):=\sum_{i=1}^{r}\frac{\ld_i^2}{1+\ld_i^2}\leq1.
\]

Note that if \(\rank \dd F\le1\), \(\CS\) is always less than \(1\).  We also prove a dichotomy that if \(\CS\leq 1\), then either \(\CS < 1\) or \(\CS\equiv1\).
When \(\CS<1\), the graph tangent plane is the only calibrated plane, generalizing the corresponding result for hypersurfaces.
When \(\CS\equiv1\), we prove that \(F\) is affine or of rank \(2\).
\end{abstract}

\maketitle

\section{Introduction}\label{sec:introduction}

An $n$-form $\omega$ on a Euclidean space has comass
\[
\|\omega\|
=
\sup\{|\omega(\xi)|:\xi\text{ is a unit simple }n\text{-vector}\}.
\]
A closed form of comass at most one is a calibration in the sense of Harvey--Lawson \cite{HarveyLawson1982}, and an oriented submanifold is calibrated when the restriction of the form is its induced volume form. The key property is that calibrated submanifolds are minimizing.

 For minimal hypersurfaces in \(\mathbb{R}^{n+1}\) that are graphical over \(\mathbb{R}^n\times\{0\}\), it is well known that one can associate a calibration form. Thus, such hypersurfaces are automatically minimizing.  In higher codimensions, graphical minimal submanifolds need not be area-minimizing; see the seminal work of Lawson and Osserman \cite{LawsonOsserman1977}.

In \cite{TsaiWang}, Tsai and Wang associate to a graphical map
\[
 F:\Om\subset\BR^n\longrightarrow\BR^m
\]
an explicit $n$-form $\Ta(F)$ whose restriction to the graph is the induced volume form.  They also show that $\Ta(F)$ is closed precisely when the graph is minimal.  Let $\ld_1\geq\ldots\geq\ld_r>0$ be the nonzero\footnote{\(r\) is the rank of \(\dd F\).} singular values of $\dd F$ (at any fixed point).  They proved that if \(\ld_i\ld_j\leq\frac{\epsilon}{r-1}\) for any \(i\neq j\), then the comass of \(\Ta(F)\) is \(1\), where \(\epsilon\in(0,1)\) is an absolute constant.  They also pointed out that \(\ld_i\ld_j\leq\frac{1}{r-1}\) is a necessary\footnote{for conditions on \(2\)-dilations} condition for comass one.

It is therefore natural to ask for the \emph{sharp} condition
under which $\Ta(F)$ has comass one.
Our main result, Theorem~\ref{thm:main}, provides a complete answer.

Let $\Pi$ denote the orthogonal projection onto the vertical factor.
At each point of $\Gamma_F$, choose an orthonormal tangent basis
$e_1,\ldots,e_n$ and set
\begin{equation}\label{eq:S-geometric}
 \CS(\ld)
 :=\sum_{i=1}^n|\Pi e_i|^2
 =\sum_{i=1}^n\langle\Pi e_i,e_i\rangle
 =\sum_{i=1}^r\frac{\ld_i^2}{1+\ld_i^2}.
\end{equation}
This definition is independent of the choice of basis.
Using a smooth local orthonormal frame shows that $\CS$ is smooth,
even where the rank of $\dd F$ changes.

\begin{thm}\label{thm:main} 
Let \(F:\Om\subset\BR^n\longrightarrow\BR^m\) satisfy the minimal graph system.  At any $p=(x,y)\in\Om\times\BR^m$, \(\bigr\|\Ta(F)|_p\bigr\|=1\) if and only if \(\CS(\ld)\leq1\).
\end{thm}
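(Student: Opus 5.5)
The plan is to reduce everything to a normal form at the fixed point $p$ and then treat the two implications separately. By the singular value decomposition of $\dd F$ at $x$, there are orthonormal bases $\epsilon_1,\ldots,\epsilon_n$ of $\BR^n$ and $f_1,\ldots,f_m$ of $\BR^m$ with $\dd F(\epsilon_i)=\ld_i f_i$ for $i\le r$ and $\dd F(\epsilon_i)=0$ for $i>r$. In these coordinates the graph tangent plane $P$ has orthonormal basis
\[
e_i=\frac{\epsilon_i+\ld_i f_i}{\sqrt{1+\ld_i^2}}.
\]
The first step is to write $\Ta(F)|_p$ explicitly in the coframe dual to $\{\epsilon_i,f_\alpha\}$. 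I expect it to split as a sum of monomials, each indexed by the set of slots where a $dx_i$ is replaced by $dy_i$, with coefficients that are explicit products of the quantities $1/\sqrt{1+\ld_i^2}$ and $\ld_i/\sqrt{1+\ld_i^2}$. Only the pairs $(\epsilon_i,f_i)$ with $i\le r$ enter. Since the restriction of $\Ta(F)$ to the graph is the volume form, we have $\Ta(F)(P)=1$, so $\|\Ta(F)|_p\|\ge1$ always. The theorem therefore reduces to showing that $\Ta(F)(\xi)\le1$ for every unit simple $\xi$ exactly when $\CS\le1$.

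For necessity ($\|\Ta\|=1\Rightarrow\CS\le1$), I would use the fact that the graph plane $P$ attains the value $1$. If the comass is $1$, then $P$ must be a local maximum of $\xi\mapsto\Ta(F)(\xi)$ on the Grassmannian. The first variation vanishes automatically. So I would compute the second variation along the explicit one-parameter families $P_t$ spanned by $\cos t\,e_i+\sin t\,\nu_j$ and $e_k$ for $k\neq i$, where $\nu_j=(-\ld_j\epsilon_j+f_j)/\sqrt{1+\ld_j^2}$ is a normal vector, and also along combinations of such families. Among these I would look for the direction that rotates all $r$ nontrivial planes $\spn\{\epsilon_i,f_i\}$ simultaneously in a coordinated way. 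I expect the Hessian in that direction to be a positive multiple of $\CS-1$, which forces $\CS\le1$. Equivalently, one can test $\Ta$ on the explicit planes $\bigwedge_i(\cos\theta_i\,\epsilon_i+\sin\theta_i f_i)$ and optimise over the angles $\theta_i$; this reduces necessity to a finite-dimensional trigonometric inequality.

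For sufficiency ($\CS\le1\Rightarrow\|\Ta\|\le1$), take an arbitrary unit simple $n$-vector $\xi$ and expand $\Ta(F)(\xi)$ in terms of the minors of the $n$-plane it represents, relative to the splitting $\BR^n\oplus\BR^m$. The aim is a bound of Hadamard or Cauchy--Schwarz type: each monomial of $\Ta$ pairs with a minor of $\xi$, and the minors satisfy $\sum(\text{minors})^2=1$. The difficulty is that the coefficient vector of $\Ta$ has Euclidean norm larger than $1$ in general, so plain Cauchy--Schwarz is too lossy. I would instead exploit the Grassmann--Plücker relations by writing $\xi=v_1\w\cdots\w v_n$ with the $v_k$ orthonormal and putting the relevant $2\times2$ blocks $(\langle v_k,\epsilon_i\rangle,\langle v_k,f_i\rangle)$ in a canonical position. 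I would then bound $\Ta(\xi)$ by a product $\prod_i(a_i\cos\phi_i+b_i\sin\phi_i)$-type expression and apply an AM--GM or product inequality. That inequality is where $\sum_i\ld_i^2/(1+\ld_i^2)\le1$ should enter, as the condition that the weights $\ld_i^2/(1+\ld_i^2)$ act as a sub-probability vector.

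The main obstacle is this sufficiency step, specifically reducing an arbitrary simple $\xi$, which mixes the $f_\alpha$ with $\alpha>r$ and couples different blocks, to the block-diagonal angle configuration. My first attempt would be a torus-averaging or symmetrisation argument. The form $\Ta$ in normal form is invariant, up to sign, under reflections $f_i\mapsto-f_i$ combined with $\epsilon_i\mapsto-\epsilon_i$. Averaging over these reflections should reduce the maximisation to the block-diagonal planes, after which the proof concludes with the one-variable-per-block product inequality described above.
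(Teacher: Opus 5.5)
\textbf{Necessity.} Your necessity argument fails: the second variation of $\Theta$ at the graph plane $P=\Gamma_A$ does not detect $\mathcal{S}$. Work in the chart $B\mapsto\Gamma_B$, use \eqref{eq:graph-formula}, and put $E=B-A$.
At $B=A=\operatorname{diag}(\lambda_i)$ the Hessian has no mixed diagonal/off-diagonal terms.
In the angle variables $\tan\theta_i=B_{ii}$ the diagonal block is $-\bigl(\operatorname{diag}(1-\lambda_i^2)+\lambda\lambda^T\bigr)$.
Each pair $(B_{ij},B_{ji})$ with $i\ne j$ contributes a negative multiple of $E_{ij}^2+E_{ji}^2-2\lambda_i\lambda_jE_{ij}E_{ji}$.
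So whenever all $\lambda_i<1$, the graph plane is a \emph{strict local maximum} of $\Theta$ on the Grassmannian. For example, take $n=3$ and $\lambda_1=\lambda_2=\lambda_3\in(1/\sqrt2,1)$. Then $\mathcal{S}=3\lambda^2/(1+\lambda^2)>1$, yet no direction at $P$ has Hessian proportional to $\mathcal{S}-1$.

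The violation is global, not infinitesimal. It occurs at the oppositely tilted plane $-\Gamma_{-A}$, i.e.\ your block-diagonal plane with $\theta_i=-\arctan\lambda_i$ and reversed orientation, where \eqref{eq:graph-formula} gives $\Theta=2\mathcal{S}-1$. This is the paper's entire necessity proof. Your ``equivalent'' optimisation over angles is the right tool only if carried out globally, and it is not equivalent to the Hessian computation.
Note also that $\Theta$ is not a sum over all subsets with product coefficients. That would be the decomposable form $\bigwedge_i(c_i\,dx_i+s_i\,dy_i)$, which has comass one for every $\lambda$. In SVD coordinates $\Theta$ is \eqref{eq:reduced-form}: its $dy$-free term has coefficient $\mathcal{C}(\lambda)(1-\mathcal{S})$, and every other term contains a single $dy_i$.

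\textbf{Sufficiency.} Reducing to block-diagonal planes by averaging over the reflections $(\epsilon_i,f_i)\mapsto(-\epsilon_i,-f_i)$ is not a valid step. The comass is a supremum of a linear functional over the non-convex cone of simple $n$-vectors. Averaging the sign-corrected orbit of a unit simple $\xi$ preserves $\Theta(\xi)$, but the result is an invariant $n$-vector of mass at most one that is no longer simple. To conclude, you would need the mass-unit ball of the invariant subspace to be the convex hull of block-diagonal planes, which is essentially the whole difficulty.

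The reduction that works is von Neumann's trace inequality \eqref{eq:von-neumann}. On the dense set of graphical planes, \eqref{eq:graph-formula} shows that $\Theta(\Gamma_B)$ depends on $B$ only through $\tr(DB)$, with $D=\operatorname{diag}\bigl(\lambda_i/(1+\lambda_i^2)\bigr)$, and through $\det(I+B^TB)=\prod_i(1+b_i^2)$, where the $b_i$ are the singular values of $B$. Since $1-\mathcal{S}\ge0$, this bounds $|\Theta(\Gamma_B)|$ by its value at a block-diagonal plane whose slopes are the $b_i$, suitably ordered.

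What remains is the specific inequality $1-\mathcal{S}+\sum_i s_ic_ib_i\le\prod_i c_i\sqrt{1+b_i^2}$, with $s_i=\lambda_i/\sqrt{1+\lambda_i^2}$ and $c_i=1/\sqrt{1+\lambda_i^2}$. Your proposal leaves this unspecified, and the left side is not a product of one-variable factors. The paper obtains it from Lemma~\ref{lem:product}, i.e.\ Hadamard's inequality for $I+(u-v)v^T$ with the unit vector $v=(\sqrt{1-\mathcal{S}},s_1,\dots,s_n)$ and $u=(\sqrt{1-\mathcal{S}},c_1b_1,\dots,c_nb_n)$. This is the precise form of your ``sub-probability'' intuition.
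The paper packages the argument in three steps:
\begin{itemize}
\item it lifts $\Theta$ to an auxiliary $(n+1)$-form $\Phi$ with no $dx_0\wedge\cdots\wedge dx_n$ term;
\item it proves $\|\Phi\|=1$ by von Neumann plus Lemma~\ref{lem:product};
\item it recovers $\Theta$ as the restriction of $\iota_{W_0}\Phi$.
\end{itemize}
This lifting also gives the uniqueness of the calibrated plane when $\mathcal{S}<1$.
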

In rank \(1\), \(\CS(\ld)<1\) automatically, reflecting the hypersurface case. In rank \(2\), the condition reduces to $\lambda_1\lambda_2\le1,$ which is precisely the area-non-increasing condition in \cite{TsaiWang}*{Theorem~1.4}.  

Section~\ref{sec:criterion} shows that, whenever $\CS(\ld)<1$,
the tangent plane of the graph is the unique calibrated plane.
When $r=2$ and $\lambda_1\lambda_2=1$, the calibrated planes form
a $\mathbb{CP}^1$-family.

The exact condition, \(\CS(\ld)\leq1\), sits naturally between familiar pairwise bounds.
For $r\geq2$,
\[
 \left\{ \max_{i<j}\ld_i\ld_j\leq\frac1{r-1} \right\}
 \quad\subset\quad
 \Bigl\{\CS(\ld)\leq1\Bigr\}
 \quad\subset\quad
 \left\{\max_{i<j}\ld_i\ld_j\leq1\right\}.
\]
The constant \(\frac{1}{r-1}\) is sharp for the first inclusion, as is seen
by taking \( \ld_1=\cdots=\ld_r=\frac1{\sqrt{r-1}} \). The last condition is precisely the area-non-increasing condition. Such conditions have played an
important role in the study of minimal graphs in higher codimension
\cite{Wang2003}, and subsequently in graphical mean curvature flow
and related rigidity questions
\cites{TsuiWang2004,SavasHalilajSmoczyk2014}. In rank at least three, the exact region is strictly smaller
than the area-non-increasing region.  We remark that the region where \(\CS(\ld)\leq 1\) is contained in the stability region identified by Lee and Tsui \cite{LeeTsui}*{Theorem A}.

Our next result investigate the condition \(\CS(\ld)\leq1\).
\begin{thm} \label{thm:intro2}
    Let \(F:\Om\subset\BR^n\longrightarrow\BR^m\) satisfy the minimal graph system, and \(\CS(\ld)\leq1\) on \(\Om\).  Then, either \(\CS(\ld)<1\) on \(\Om\) or \(\CS(\ld)\equiv 1\).
    Furthermore, if \(\CS(\ld)\equiv1\), there are two possibilities.
    \begin{itemize}
        \item \(\rank\dd F\geq 3\): \(F\) must be an affine function.
        \item \(\rank\dd F\equiv2\): the image of \(F\) is contained in a \(2\)-dimensional affine subspace of \(\BR^m\).
    \end{itemize}
\end{thm}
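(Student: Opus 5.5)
The plan has three stages: a strong maximum principle for \(\CS\), a rigidity step on the set \(\{\CS=1\}\), and a unique continuation argument. Throughout, \(\CS\) is regarded as a smooth function on the minimal graph \(\Gamma_F\). By \eqref{eq:S-geometric}, \(\CS=\tr(\Pi|_{T\Gamma})\) is the trace of the restriction of the vertical projection to the tangent plane. Equivalently, \(\CS=\sum_{i=1}^n\sum_\alpha\langle e_i,E_\alpha\rangle^2\), where \(E_\alpha\) is a fixed orthonormal basis of the vertical \(\BR^m\). Each function \(\langle \cdot,E_\alpha\rangle\) restricted to a minimal submanifold is harmonic. Computing \(\Delta\CS\) with a Bochner/Simons-type formula in a frame adapted to the singular value decomposition should give
\[
\Delta \CS \;=\; Q(\ld,\sdf)\;+\;2\sum_{k}\sum_{i,\alpha}\bigl(\nabla_k\langle e_i,E_\alpha\rangle\bigr)^2\text{-type terms},
\]
where \(Q\) is quadratic in the second fundamental form \(h^\alpha_{ij}\), with coefficients depending on \(\ld\).

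The first key step is to show that, where \(\CS\le 1\), we have \(-\Delta\CS\ge -C|\nabla \CS|\) (or an inequality of the form \(\Delta(1-\CS)\le C|\nabla\CS|\)). This makes \(1-\CS\ge0\) a supersolution of a linear elliptic inequality. Hopf's strong maximum principle then says: if \(1-\CS\) vanishes at one interior point, it vanishes identically on the connected domain \(\Om\). This yields the dichotomy. I expect this to be the main obstacle. The Hessian of \(\CS\) mixes terms \(\ld_i h^{\alpha}_{jk}\), and it is not obvious that the quadratic form \(Q\) has a sign on \(\{\CS\le1\}\). I would first try to exploit the first-order condition at the maximum, namely \(\nabla\CS=0\), together with the pointwise constraint \(\sum\ld_i^2/(1+\ld_i^2)\le1\). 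The aim is to show \(Q\ge0\), modulo terms controlled by \(|\nabla\CS|\), with equality only when the relevant components of \(\sdf\) vanish. If a direct sign fails, a fallback is to use Theorem~\ref{thm:main} instead. On \(\{\CS\le1\}\), \(\Ta(F)\) is a calibration, and closedness of \(\Ta\) should give a divergence identity whose integrand reflects \(1-\CS\).

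The second step is the rigidity when \(\CS\equiv1\). Then \(\Delta\CS=0\) and \(\nabla\CS=0\), so every nonnegative term in the Bochner identity must vanish. When \(r\ge3\), the equality analysis from the first step should force \(\sdf\equiv0\). Then the graph is totally geodesic, so \(F\) is affine. When \(r=2\), the condition is \(\ld_1\ld_2=1\). Here the vanishing terms should only force the components of \(\sdf\) pointing outside the span of the two vertical singular directions \(\Pi e_1,\Pi e_2\) to vanish, that is, normal directions orthogonal to \(d F(T\Om)\). I would then show that the \(2\)-plane \(V=\operatorname{im}\dd F\subset\BR^m\) is parallel. Its derivative is given by \(\sdf\)-components in \(V^\perp\), which vanish, so \(V\) is constant. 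Hence \(F\) takes values in a fixed affine \(2\)-plane.

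Finally, the rank statements need a unique continuation remark. Ranks \(0\) and \(1\) give \(\CS<1\), so on \(\{\CS\equiv1\}\) the rank is at least \(2\). Minimal graphs are real-analytic, so the rank is constant on an open dense set. This lets the two cases be separated globally: either the rank is at least \(3\) somewhere, which forces \(F\) to be affine, or the rank is \(\equiv2\).
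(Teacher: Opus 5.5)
Your outline has the same architecture as the paper: a strong maximum principle for $\CS$, then vanishing of components of $\sdf$ when $\CS\equiv1$, then unique continuation. Your rank-two and analyticity arguments are fine. The gap is Stage~1, which you yourself flag as ``the main obstacle''. It is the whole content of the dichotomy, and the proposal does not prove it.

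\textbf{Why the quadratic form has no sign.} In an SVD frame, the Bochner formula for $\CS=\sum_\alpha|\nabla y_\alpha|^2$ gives
\[
 \tfrac12\Delta\CS=\sum_{\alpha,i,k}\Bigl(1-\frac{\ld_{\alpha-n}^2}{1+\ld_{\alpha-n}^2}-\frac{\ld_i^2}{1+\ld_i^2}\Bigr)h_{\alpha ik}^2 .
\]
\begin{itemize}
\item For $\alpha\neq n+i$ the two subtracted terms are distinct summands of $\CS$, so the coefficient is at least $1-\CS\geq0$.
\item For the paired components $h_{(n+i)ik}$ the coefficient is $\frac{1-\ld_i^2}{1+\ld_i^2}$. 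This is negative whenever $\ld_1>1$, which is allowed under $\CS\le1$; for example $r=2$, $\ld_1=2$, $\ld_2=\frac25$.
\end{itemize}
Neither suggestion you make repairs this. Using $\nabla\CS=0$ at the maximum point does not feed into Hopf's principle, which needs $\Delta\CS\geq-C|\nabla\CS|$ on a whole neighborhood. (Note also that your first displayed version, $-\Delta\CS\geq-C|\nabla\CS|$, has the wrong sign.) The calibration/divergence fallback is not yet an argument.

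\textbf{The missing idea.} You need to trade the bad paired terms for gradient terms. Fix $k$, write $h_i=h_{(n+i)ik}$, and use $\nabla_k\CS=2\sum_j\frac{\ld_j}{1+\ld_j^2}h_j$. Rewrite
\[
 \sum_i\frac{1-\ld_i^2}{1+\ld_i^2}h_i^2=(1-\CS)\sum_i\frac{h_i^2}{1+\ld_i^2}+\sum_i\frac{\ld_i^2}{1+\ld_i^2}\Bigl(\sum_j\frac{h_j^2}{1+\ld_j^2}-h_i^2\Bigr).
\]
Expanding $0\leq\sum_{j\neq i}\frac{(h_j+\ld_i\ld_jh_i)^2}{1+\ld_j^2}$ yields
\[
 \sum_j\frac{h_j^2}{1+\ld_j^2}-h_i^2\geq(1-\CS)\ld_i^2h_i^2-\ld_ih_i\nabla_k\CS .
\]
Combining these gives
\[
 \tfrac12\Delta\CS\geq(1-\CS)\sum_{i,k}\frac{1+\ld_i^4}{1+\ld_i^2}h_{(n+i)ik}^2+\sum_{\alpha\neq n+i}\Bigl(1-\frac{\ld_{\alpha-n}^2}{1+\ld_{\alpha-n}^2}-\frac{\ld_i^2}{1+\ld_i^2}\Bigr)h_{\alpha ik}^2-\sum_k\Bigl(\sum_i\frac{\ld_i^3}{1+\ld_i^2}h_{(n+i)ik}\Bigr)\nabla_k\CS .
\]
The last term is linear in $\nabla\CS$ with a locally bounded coefficient; the paper rewrites it using $\nabla_k\log(\ast\Omega)=-\sum_i\ld_ih_{(n+i)ik}$. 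This is what lets the strong maximum principle apply.

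\textbf{Consequence for Stage~2.} The same inequality is what your $r\geq3$ case depends on. At $\CS\equiv1$ it forces only the $\alpha\neq n+i$ components to vanish; their coefficients are positive because a third nonzero singular value contributes. The components $h_{(n+i)ii}$ are not controlled by it. You still have to kill them using two further facts:
\begin{itemize}
\item symmetry, $h_{(n+i)ik}=h_{(n+i)ki}$, which vanishes for $k\neq i$;
\item the minimality trace condition $\sum_jh_{(n+i)jj}=0$.
\end{itemize}
So ``equality analysis forces $\sdf\equiv0$'' is not automatic from the inequality alone.
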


This paper is organized as follows.  Section~\ref{sec:reduced-form} derives the pointwise reduced form
and its graph-plane evaluation.    Section~\ref{sec:product} proves the sharp product inequality
and introduces a companion calibration form.
Section~\ref{sec:criterion} proves Theorem~\ref{thm:main}, and described the calibrated planes.
Finally, Section~\ref{sec:single-contact-rigidity} establishes Theorem~\ref{thm:intro2}. 

\begin{AIuse}
The authors formulated the problem of determining the exact comass-one condition and provided the relevant mathematical literature to ChatGPT 5.6 Sol. Through an extended, iterative interaction guided by the authors, ChatGPT 5.6 Sol identified the explicit sharp criterion and developed the main proof ideas of Theorem~\ref{thm:main}. The authors then checked, and substantially revised these arguments.
The authors take full responsibility for the correctness and content of the article.
\end{AIuse}

\begin{ackn}
    The authors thank Chung-Jun Tsai for helpful discussions and suggestions concerning this work, in particular for suggesting that we investigate the condition \(\CS\leq1\).
\end{ackn}

\section{The Differental Form Associated with a Map}\label{sec:reduced-form}

Let $F:\Om\subset\BR^n\to\BR^m$ be a smooth map,
and orient its graph by projection to $\Om$.  The $n$-form $\Ta(F)$
introduced in \cite{TsaiWang}*{Definition~2.4} restricts to the induced
volume form on the graph.  We use the coordinate expression from
\cite{TsaiWang}*{Proposition~3.1, equation~(3.2)}.

\begin{assumption}
The main purpose of Section~\ref{sec:reduced-form} through \ref{sec:criterion} (except in Section~\ref{sec:single-contact-rigidity}) is to evaluate the comass.  Since it is a pointwise calculation, we may assume $m=n$ and $\dd F$ has full rank at the point under consideration; see \cite{TsaiWang}*{Section~2.1}.  In particular, all singular values $\ld_1,\ldots,\ld_n$ are positive.

\end{assumption}

Fix $p=(x,y)$.  Choose oriented orthonormal coordinates
$x_1,\ldots,x_n$ on the domain and paired orthonormal coordinates
$y_1,\ldots,y_n$ on the target, without prescribing the orientation
of the target coordinates, so that
\begin{equation}\label{eqn:SVD-reduced}
 \dd F|_x\left(\frac{\pl}{\pl x_i}\right)
 =\ld_i\frac{\pl}{\pl y_i},
 \qquad i=1,\ldots,n,
\end{equation}
where $\ld_i>0$.  Set
\(\widehat{\dd x_i}
 =\dd x_1\w\cdots\w\dd x_{i-1}
  \w\dd x_{i+1}\w\cdots\w\dd x_n\),
and write
\begin{equation}\label{eq:coefficients}
 \CC(\ld)=\sqrt{\prod_{i=1}^{n}(1+\ld_i^2)},
 \qquad
 \CS(\ld)=\sum_{i=1}^{n}\frac{\ld_i^2}{1+\ld_i^2}.
\end{equation}
By \cite{TsaiWang}*{(3.2)}, in these coordinates, the \(n\)-form \(\Ta(F)\) takes the form:
\begin{equation}\label{eq:reduced-form}
 \Ta:=\Ta(F)|_p
 =\CC(\ld)\left(
    (1-\CS(\ld))\,\dd x_1\w\cdots\w\dd x_n
    +\sum_{i=1}^{n}(-1)^{i-1}
      \frac{\ld_i}{1+\ld_i^2}\,
      \dd y_i\w\widehat{\dd x_i}
  \right).
\end{equation}

For any linear transformation
$B:\BR^n\to\BR^n$, denote by $\Gamma_B$ its graph in $\BR^n\oplus\BR^n$,
oriented by projection to $\BR^n\oplus\{0\}$.
The $n$-plane $\Gamma_B$ has the ordered frame
\[
 \frac{\pl}{\pl x_j}
 +\sum_{i=1}^{n}B_{ij}\frac{\pl}{\pl y_i},
 \qquad j=1,\ldots,n.
\]

Since graphical (over $\BR^n\oplus\{0\}$) $n$-planes are dense in the unoriented Grassmannian $\RGr(n,\BR^n\oplus\BR^n)$, by continuity it suffices to estimate $|\Ta(\Gamma_B)|$ for any $n\times n$ matrix $B$.

A direct computation shows that
\begin{equation}\label{eq:graph-formula}
 \Ta (\Gamma_B)
 =\CC(\ld)\,
   \frac{1-\CS(\ld)+\sum_i\frac{\ld_i}{1+\ld_i^2} \cdot B_{ii}}
        {\sqrt{\det(I+B^TB)}}.
\end{equation}
Clearly, if $B_\ld=\operatorname{diag}(\ld_1,\ldots,\ld_n)$, we have $\Ta (\Gamma_{B_\ld})=1$ and hence $\|\Ta\|\geq1$ for every $\ld$.

\section{Inequalities in Matrix Algebra}\label{sec:product}

\subsection{A Sharp Product Inequality}

The comass estimate rests on the following inequality.

\begin{lem}\label{lem:product}
Let $n\geq0$, let $v_i\geq0$ satisfy $\sum_{i=0}^n v_i^2=1$,
and let $u_i\in\mathbb R$. Then
\begin{equation}\label{eq:product-ineq}
 \left(\sum_{i=0}^n v_i u_i\right)^2
 \leq \prod_{i=0}^n(1-v_i^2+u_i^2).
\end{equation}
Equality holds when
$(u_0,\ldots,u_n)=\pm(v_0,\ldots,v_n)$.
When $n\geq2$ and $v_i>0$ for every $i$,
these are the only equality cases.
\end{lem}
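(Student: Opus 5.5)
The plan is to prove \eqref{eq:product-ineq} by induction on $n$, merging two indices into one at each step. The base case $n=0$ is immediate: $v_0=1$, so both sides equal $u_0^2$.

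For the inductive step, take the indices $0,1$, set $w=\sqrt{v_0^2+v_1^2}\in[0,1]$, and assume $w>0$. (If $w=0$, then $v_0=v_1=0$, the left side of \eqref{eq:product-ineq} does not involve $u_0,u_1$, and the two factors $1+u_0^2,\,1+u_1^2\ge1$ can simply be dropped.) Define the merged variable
\[
U=\frac{v_0u_0+v_1u_1}{w},\qquad\text{so that}\qquad wU=v_0u_0+v_1u_1 .
\]
Replacing $(v_0,u_0),(v_1,u_1)$ by the single pair $(w,U)$ keeps $\sum v_i^2=1$ and leaves $\sum v_iu_i$ unchanged. Since all other factors of the product are nonnegative, the induction closes once we prove the two-variable merging inequality
\[
(1-v_0^2+u_0^2)(1-v_1^2+u_1^2)\;\ge\;1-w^2+U^2 .
\]

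To prove the merging inequality, expand the left side using $(1-v_0^2)(1-v_1^2)=1-w^2+v_0^2v_1^2$. After cancelling $1-w^2$, it remains to show
\[
Q:=v_0^2v_1^2+(1-v_0^2)u_1^2+(1-v_1^2)u_0^2+u_0^2u_1^2\;\ge\;U^2 .
\]
Write $1-v_0^2=v_1^2+(1-w^2)$ and $1-v_1^2=v_0^2+(1-w^2)$. This regroups $Q$ as
\[
Q=(v_0v_1-u_0u_1)^2+(v_0u_0+v_1u_1)^2+(1-w^2)(u_0^2+u_1^2).
\]
The middle term is $w^2U^2$, so it remains to show $(1-w^2)(u_0^2+u_1^2)\ge(1-w^2)U^2$. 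This follows from Cauchy--Schwarz, $(v_0u_0+v_1u_1)^2\le w^2(u_0^2+u_1^2)$, which gives $U^2\le u_0^2+u_1^2$. I expect finding this regrouping, and in particular the right choice of $U$, to be the only real obstacle; the rest is bookkeeping.

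For the equality cases, sufficiency is a direct check. If $u=\pm v$, the left side is $(\sum v_i^2)^2=1$ and every factor is $1-v_i^2+v_i^2=1$. For necessity, assume $n\ge2$ and all $v_i>0$, and trace equality back through the first merge. Because $n\ge2$, we have $w^2=v_0^2+v_1^2<1$, so the term $(1-w^2)(u_0^2+u_1^2-U^2)$ must vanish. This forces equality in Cauchy--Schwarz, i.e.\ $(u_0,u_1)=c\,(v_0,v_1)$. We also need $v_0v_1=u_0u_1$, which gives $c^2=1$ because $v_0v_1>0$. Moreover, the remaining factors of the product, being the merged instance, must also be in the equality case. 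Since any pair of indices can be merged first, every pair satisfies $(u_i,u_j)=\pm(v_i,v_j)$. Because all $v_i>0$, these signs must agree across overlapping pairs, so $u=\pm v$ globally.
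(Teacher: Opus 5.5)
Your argument is correct, and it takes a genuinely different route from the paper. The paper gives a one-shot, non-inductive proof. It sets $M=I+(u-v)v^T$, so $\det M=v^Tu$ by the matrix determinant lemma and the $i$-th row $r_i$ of $M$ satisfies $|r_i|^2=1-v_i^2+u_i^2$. Hadamard's inequality then gives the bound at once, and the equality case reduces to pairwise orthogonality of the rows, $\langle r_i,r_j\rangle=u_iu_j-v_iv_j=0$, combined across three indices.

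You instead collapse two indices at a time and prove the merging inequality by an explicit regrouping plus Cauchy--Schwarz. The two proofs are closely linked: the cross term $(v_0v_1-u_0u_1)^2$ in your regrouping is exactly $\langle r_0,r_1\rangle^2$ for the paper's rows, so you recover the paper's condition $u_0u_1=v_0v_1$. Your additional Cauchy--Schwarz equality condition then lets you conclude $(u_0,u_1)=\pm(v_0,v_1)$ pair by pair. The paper's route buys brevity and an immediate equality characterization, at the cost of quoting Hadamard's inequality. Yours is fully elementary and self-contained, at the cost of induction bookkeeping.

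Two small points should be made explicit. First, to pass from equality in the full inequality to equality in the merging step, you must cancel $\prod_{i\geq 2}(1-v_i^2+u_i^2)$. This is positive because all $v_i>0$ with at least three indices forces every $v_i<1$. Second, in the case $w=0$, dropping both factors lowers the number of indices by two rather than one. So either use strong induction, or merge to the pair $(w,U)=(0,0)$, which only needs $1\le(1+u_0^2)(1+u_1^2)$.
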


\begin{proof}
Consider the column vectors
\[
 v=(v_0,\ldots,v_n)^T,
 \qquad u=(u_0,\ldots,u_n)^T.
\]
By assumption, $v$ is a unit vector. Set
\[
 M=I+(u-v)v^T.
\]
By the matrix determinant lemma,
\begin{equation}\label{eq:rank-one-determinant}
 \det M=1+v^T(u-v)=v^Tu
 =\sum_{i=0}^n v_i u_i.
\end{equation}
Denote the $i$-th row of $M$ by $r_i$, and let $e_i$ be the
$i$-th standard basis column vector. Since
\[
 r_i=e_i^T+(u_i-v_i)v^T,
\]
we have
\[
 |r_i|^2
 =1+2v_i(u_i-v_i)+(u_i-v_i)^2
 =1-v_i^2+u_i^2.
\]
Hadamard's determinant inequality now yields
\[
 \left(\sum_{i=0}^n v_i u_i\right)^2
 =(\det M)^2
 \leq\prod_{i=0}^n |r_i|^2
 =\prod_{i=0}^n(1-v_i^2+u_i^2),
\]
which is \eqref{eq:product-ineq}. If $u=\pm v$, both sides
equal one, proving the equality assertion.

Suppose that $n\geq2$ and $v_i>0$ for every $i$.
Then $|r_i|^2>0$ for every $i$, so equality in Hadamard's
inequality holds exactly when the rows are pairwise orthogonal.
For $i\ne j$,
\begin{align*}
 0=\langle r_i,r_j\rangle
 &=v_j(u_i-v_i)+v_i(u_j-v_j)+(u_i-v_i)(u_j-v_j)\\
 &=u_i u_j-v_i v_j.
\end{align*}
Therefore $u_i u_j=v_i v_j$ for all $i\ne j$.
Since $n\geq2$ and all $v_i>0$, these relations force $u=\pm v$.
\end{proof}

\subsection{An Auxiliary Calibration}

We extend $\BR^n\oplus\BR^n$ by $\BR^1\oplus\BR^1$, denoting the two additional coordinate directions by $x_0$ and $y_0$. Later on, we will introduce an auxiliary $(n+1)$-form $\Phi$ on $\BR^{n+1}\oplus\BR^{n+1}$ associated with $\Ta$.  In this subsection, we work with a general expression of $\Phi$, and study its comass.

For $i=0,\ldots,n$, let $s_i\geq0$ and $c_i>0$ satisfy
$s_i^2+c_i^2=1$ and $\sum_{i=0}^n s_i^2=1$.
Define
\begin{equation}\label{eq:Phi-s}
 \Phi=
 \left(\prod_{j=0}^n c_j\right)^{-1} \left( s_i c_i\,\dd y_0\w\dd x_1\w \cdots \dd x_n  + 
 \sum_{i=1}^n(-1)^i s_i c_i\,\dd y_i\w\dd x_0\w\widehat{\dd x_i} \right).
\end{equation}

Recall von Neumann's trace inequality and its equality
characterization \cite{Carlsson2021}.
For $A,B\in\BR^{(n+1)\times(n+1)}$, write $\sigma_i(A)$ and $\sigma_i(B)$
for their singular values in \emph{nonincreasing order}.  Then
\begin{equation}\label{eq:von-neumann}
 \bigl|\tr(A^TB)\bigr|\leq\sum_{i=0}^n\sigma_i(A)\sigma_i(B).
\end{equation}
Equality holds if and only if, for some orthogonal matrices $U,V$,
\[
 A=U\operatorname{diag}(\sigma_i(A))V^T,
 \qquad
 B= \pm\,U\operatorname{diag}(\sigma_i(B))V^T.
\]

\begin{prop}\label{prop:Phi-comass}
The form $\Phi$ defined by \eqref{eq:Phi-s} has comass one.
\end{prop}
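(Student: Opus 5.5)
The plan is to compute $\Phi$ on graphical planes, in the same way \eqref{eq:graph-formula} was obtained for $\Ta$. I read the first term of \eqref{eq:Phi-s} as $s_0c_0\,\dd y_0\w\dd x_1\w\cdots\w\dd x_n$. Write $a_i=s_ic_i\big/\prod_j c_j\geq0$ for $i=0,\ldots,n$. Then every term of $\Phi$ has the form $\pm a_i\,\dd y_i\w(\text{all }\dd x_k\text{ with }k\neq i)$.

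For an $(n+1)\times(n+1)$ matrix $B$, let $\Gamma_B\subset\BR^{n+1}\oplus\BR^{n+1}$ be its graph, with frame $\pl_{x_j}+\sum_i B_{ij}\pl_{y_i}$. On this frame, the form $\dd y_i\w\dd x_0\w\cdots\widehat{\dd x_i}\cdots\w\dd x_n$ picks up only the $y_i$-component of the $j=i$ frame vector. Moving $\dd y_i$ into slot $i$ costs the sign $(-1)^i$, and the signs in \eqref{eq:Phi-s} are designed to cancel it. I expect
\[
 \Phi(\Gamma_B)=\frac{\sum_{i=0}^n a_iB_{ii}}{\sqrt{\det(I+B^TB)}}
 =\frac{\tr(DB)}{\prod_k\sqrt{1+\sigma_k(B)^2}},
 \qquad D=\operatorname{diag}(a_0,\ldots,a_n).
\]
This sign bookkeeping is the only place where care is needed. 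It is the step I expect to be the main (if modest) obstacle, and I would check it against the $i=0$ term separately. Graphical planes are dense in the Grassmannian, so by continuity it suffices to bound $|\Phi(\Gamma_B)|$ by one.

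Next, I would relabel the indices so that $a_0\geq a_1\geq\cdots\geq a_n$. This is harmless, because the hypotheses on $(s_i,c_i)$ are invariant under permuting the pairs. The singular values of $D$ are then the $a_k$ in nonincreasing order. By von Neumann's inequality \eqref{eq:von-neumann},
\[
 |\tr(DB)|\leq\sum_k a_k\sigma_k,
 \qquad\sigma_k=\sigma_k(B).
\]
It therefore remains to show
\[
 \Bigl(\sum_k s_kc_k\sigma_k\Bigr)^2\leq\prod_k c_k^2(1+\sigma_k^2)=\prod_k\bigl(1-s_k^2+(c_k\sigma_k)^2\bigr).
\]
This is exactly Lemma~\ref{lem:product} with $v_k=s_k$ and $u_k=c_k\sigma_k$. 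The hypotheses of the lemma hold: $v_k\geq0$ and $\sum v_k^2=1$. Hence $|\Phi|\leq1$ on all unit simple $(n+1)$-vectors.

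Finally, I would show the comass is attained. Take $B=\operatorname{diag}(s_k/c_k)$, which is allowed because $c_k>0$. Then $u_k=c_k\sigma_k=s_k=v_k$, which is the equality case of Lemma~\ref{lem:product}. Von Neumann's inequality is also an equality here, since $D$ and $B$ are simultaneously diagonal with compatible ordering. A direct check gives
\[
 \Phi(\Gamma_B)=\frac{\sum_k s_k^2/\prod_j c_j}{\prod_k (1/c_k)}=\sum_k s_k^2=1.
\]
Thus $\|\Phi\|=1$.
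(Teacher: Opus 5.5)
Your proposal is correct and follows the paper's proof essentially step for step. Both evaluate $\Phi$ on graphs $\Gamma_B$, sort the $s_ic_i$ and apply von Neumann's trace inequality, then use Lemma~\ref{lem:product} with $v_i=s_i$ and $u_i=c_i\sigma_i(B)$, with equality attained at $B=\operatorname{diag}(s_i/c_i)$. Your sign check is right, and so is your reading of the first coefficient in \eqref{eq:Phi-s} as $s_0c_0$, which corrects a typo in the paper.
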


\begin{proof}
It suffices to prove that $|\Phi(\Gamma_B)|\leq1$ for every linear map $B:\BR^{n+1}\to\BR^{n+1}$.
Put $A=\operatorname{diag}(s_0c_0,\ldots,s_nc_n)$ and let
$b_0,\ldots,b_n$ be the singular values of $B$.
Relabel the pairs $(s_i,c_i)$ so that $s_ic_i$ is
nonincreasing, and list $b_i$ in nonincreasing order.
By \eqref{eq:graph-formula} and \eqref{eq:von-neumann},
\begin{align*}
 |\Phi(\Gamma_B)|
 &=\left(\prod_jc_j\right)^{-1}
   \frac{|\tr(AB)|}{\sqrt{\det(I+B^TB)}} \\
 &\leq\left(\prod_jc_j\right)^{-1}
   \frac{\sum_i s_ic_ib_i}{\prod_i\sqrt{1+b_i^2}} \\
 &\leq\left(\prod_jc_j\right)^{-1}
   \frac{\prod_i\sqrt{1-s_i^2+c_i^2b_i^2}}
        {\prod_i\sqrt{1+b_i^2}}
 =1,
\end{align*}
where the second inequality follows from Lemma~\ref{lem:product},
applied with $v_i=s_i$ and $u_i=c_ib_i$.

Note that $B=\operatorname{diag}(s_i/c_i)$ achieves the equality.
\end{proof}

\section{The Exact Comass Criterion}\label{sec:criterion}

This section is devoted to the proof of Theorem~\ref{thm:main}, using the notation of \eqref{eq:coefficients}--\eqref{eq:reduced-form}.

\subsection{Sufficiency}
Assume $\CS(\ld)\leq1$.  For $i=1,\ldots,n$, set
\begin{equation}\label{eq:sc-parameters}
 s_i=\frac{\ld_i}{\sqrt{1+\ld_i^2}},
 \qquad
 c_i=\frac1{\sqrt{1+\ld_i^2}}.
\end{equation}
Then $\sum_{i=1}^n s_i^2=\CS(\ld)$.
For the additional coordinate pair $x_0,y_0$, set
\begin{equation}
 s_0=\sqrt{1-\CS(\ld)},
 \qquad c_0=\sqrt{\CS(\ld)}.
\end{equation}
Thus
\begin{equation}\label{eq:unit-completion}
 \sum_{i=0}^{n} s_i^2=1,
 \qquad
 \prod_{i=0}^{n} c_i=c_0\CC(\ld)^{-1}.
\end{equation}
With these choices of $s_i$'s and  $c_i$'s, \eqref{eq:Phi-s} becomes
\begin{equation}\label{eq:Phi-tilde}
 \Phi
 = \CC(\ld)s_0\,\dd y_0\w
   \dd x_1\w\cdots\w\dd x_n
 + \frac{\CC(\ld)}{c_0}
   \sum_{i=1}^{n}(-1)^i s_i c_i\,
   \dd y_i\w\dd x_0\w\widehat{\dd x_i},
\end{equation}
and Proposition~\ref{prop:Phi-comass} asserts that $\Phi$ has comass one.

Let
\[
 W_0=c_0\frac{\pl}{\pl x_0}+s_0\frac{\pl}{\pl y_0}.
\]
The key observation is the contraction identity
\begin{equation}\label{eq:contraction-identity}
 \left.(\iota_{W_0}\Phi)\right|_{\BR^n\oplus\BR^n}=\Ta.
\end{equation}
Since $W_0$ is unit and orthogonal to $\BR^n\oplus\BR^n$,
$\|\Ta\|\leq\|\Phi\|=1$.

\subsection{Necessity}

By \eqref{eq:graph-formula}, for $B=\operatorname{diag}(-\ld_1,\ldots,-\ld_n)$, we have
\[
 \Ta(\Gamma_{B})=1-2\CS(\ld).
\]
If $\CS(\ld)>1$, $\Ta(\Gamma_{B}) < -1$.
This proves necessity and completes the proof of Theorem~\ref{thm:main}.

\subsection{Calibrated Planes} \label{sec:calibrated-planes}

For an $n$-form $\omega$ of comass one on $\BR^{2n}$, write
\[
 \mathcal G^+(\omega)
 :=\{P\in\RGr^+(n,\BR^{2n}):\omega(P)=1\}
\]
for its set of calibrated oriented planes. We first determine equality
for the corresponding $(n+1)$-form $\Phi$, then use the contraction
identity to prove uniqueness for $\Ta$ in the strict region.

\subsection{Calibrated Planes of \texorpdfstring{$\Phi$}{Phi}}\label{sec:cali-comp-form}

Throughout this subsection, assume that $n\geq2$, $0<s_i,c_i<1$ satisfy
\[
 s_i^2+c_i^2=1,
 \qquad
 \sum_{i=0}^n s_i^2=1,
\]
and let $\Phi$ be the $(n+1)$-form defined in \eqref{eq:Phi-s}.

\begin{lem}
\label{lem:von-neumann-equality}
Let $D=\operatorname{diag}(s_0c_0,\ldots,s_nc_n)$.
Suppose that, after the common relabeling used in
\eqref{eq:von-neumann}, a matrix $B$ has singular values $b_i=s_i/c_i$
and satisfies
\begin{equation}
 |\tr(DB)|=\sum_{i=0}^n s_ic_i b_i.
\end{equation}
Then, $B$ must be $\operatorname{diag}(s_0/c_0,\ldots,s_n/c_n)$ or $-\operatorname{diag}(s_0/c_0,\ldots,s_n/c_n)$.
\end{lem}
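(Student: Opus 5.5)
The plan is to use the equality case of von Neumann's inequality \eqref{eq:von-neumann} to turn the hypothesis into matrix identities, and then use the rigid relation between $d_i:=s_ic_i$ and $b_i=s_i/c_i$ to force $B$ to be diagonal. Replacing $B$ by $-B$ if necessary, we may assume $\tr(DB)=\sum_i s_ic_ib_i$; at the end this gives the sign $\pm$.

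\emph{Step 1: common singular vectors.} Since $D$ is diagonal, $D=D^T$ and $\tr(D^TB)=\tr(DB)$. By the equality characterization there are orthogonal $U,V$ with
\[
D=U\operatorname{diag}(d_i)V^T,\qquad B=U\operatorname{diag}(b_i)V^T,
\]
with the same (relabeled) index order. Put $S:=D^TB=DB=V\operatorname{diag}(d_ib_i)V^T=V\operatorname{diag}(s_i^2)V^T$. Then $S$ is symmetric with eigenvalues $s_i^2\in(0,1)$. Moreover
\[
D^TD=V\operatorname{diag}(d_i^2)V^T=V\operatorname{diag}\bigl(s_i^2(1-s_i^2)\bigr)V^T=S(I-S),
\]
since $d_i^2=s_i^2c_i^2=s_i^2(1-s_i^2)$. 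Thus $D^2=S-S^2$ is a polynomial in $S$, so $S$ commutes with the diagonal matrix $D^2$. Also $B=D^{-1}S$, because every $d_i>0$ and so $D$ is invertible.

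\emph{Step 2: block decomposition.} Decompose $\BR^{n+1}$ into the eigenspaces $E_\delta$ of $D$, that is, the coordinate subspaces on which $d_i=\delta$. Because $S$ commutes with $D^2$ and the $d_i$ are positive, $S$ preserves each $E_\delta$. On $E_\delta$ the symmetric block $S|_{E_\delta}$ satisfies $S-S^2=\delta^2 I$. Hence its eigenvalues lie in $\{t,1-t\}$, where $t$ is a root of $t(1-t)=\delta^2$.

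\emph{Step 3: use $\sum s_i^2=1$.} This is the main obstacle. A priori a block could carry both eigenvalues $t$ and $1-t$ with $t\neq 1/2$, and then $S$ need not be diagonal. The eigenvalues of $S|_{E_\delta}$ are exactly the $s_i^2$ with $i$ in that block; this holds because $V$ maps the coordinates with $d_i=\delta$ onto $E_\delta$, since $D^2=V\operatorname{diag}(d_i^2)V^T$ is diagonal. Suppose two indices $i\neq j$ had $s_i^2=t$ and $s_j^2=1-t$. Then $s_i^2+s_j^2=1$, and $\sum_k s_k^2=1$ would force $s_k=0$ for the remaining indices. There is at least one such index because $n\geq2$, and this contradicts $s_k>0$. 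The degenerate value $t=1-t=1/2$ causes no problem, since then the eigenvalue is single. So on each block $S|_{E_\delta}=t_\delta I$ is scalar, with $t_\delta=s_i^2$ for the indices $i$ in that block.

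\emph{Step 4: conclusion.} Steps 2 and 3 give $S=\operatorname{diag}(s_0^2,\ldots,s_n^2)$ in the original coordinates. Therefore
\[
B=D^{-1}S=\operatorname{diag}\bigl(s_i^2/(s_ic_i)\bigr)=\operatorname{diag}(s_0/c_0,\ldots,s_n/c_n).
\]
Undoing the initial sign change gives the alternative $-\operatorname{diag}(s_i/c_i)$. One bookkeeping point needs care: the indices in $S=\operatorname{diag}(s_i^2)$ refer to the original diagonal positions of $D$ and not to the sorted labels. This is consistent because the relabeling in \eqref{eq:von-neumann} is a simultaneous permutation applied to the pairs $(s_i,c_i)$.
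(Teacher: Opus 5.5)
Your proof is correct and is essentially the paper's argument. The equality case of von Neumann's inequality gives a common ordered SVD, and the constraints $\sum_k s_k^2=1$, $n\ge2$, $s_k>0$ rule out two different values of $s_i^2$ within a block of equal $s_ic_i$; your observation that $t$ and $1-t$ cannot both occur is exactly the paper's factorization $(s_i^2-s_j^2)(1-s_i^2-s_j^2)=0$, so $B$ acts as a scalar on each eigenspace of $D$. Your auxiliary matrix $S=DB=V\operatorname{diag}(s_i^2)V^T$ with $D^2=S-S^2$ only makes explicit a step the paper states tersely: the freedom in choosing singular frames inside repeated blocks of $D$ does not affect $B$.
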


\begin{proof}
By the equality case of \eqref{eq:von-neumann}, $D$ and $\pm B$ admit a common ordered singular value decomposition.  For simplicity, assume it is $B$.
If $s_ic_i=s_jc_j$ for some $i\neq j$, then
\[
 0=s_i^2c_i^2-s_j^2c_j^2
  =(s_i^2-s_j^2)(1-s_i^2-s_j^2).
\]
Since $n\geq2$, all $s_k>0$, and $\sum_{k=0}^n s_k^2=1$, the second factor, $1-s_i^2-s_j^2$, is positive.  Thus $s_i=s_j$ and $c_i=c_j$, so the singular values
$b_i=s_i/c_i$ are constant on every repeated singular-value block of $D$.
The choice of common singular frames within these blocks therefore
does not affect $B$, and hence $B = \operatorname{diag}(s_0/c_0,\ldots,s_n/c_n)$.
\end{proof}

\begin{lem}
\label{lem:nontransverse-calibrated}
If an oriented $(n+1)$-plane $P$ is not transverse to
$\{0\}\oplus\BR^{n+1}$, then
$P\notin\mathcal G^+(\Phi)$.
\end{lem}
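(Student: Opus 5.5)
The plan is to exploit the fact that every term of $\Phi$ contains exactly one vertical covector $\dd y_i$. An oriented $(n+1)$-plane $P$ that fails to be transverse to $\{0\}\oplus\BR^{n+1}$ contains a unit vertical vector $w=\sum_{i=0}^n w_i\frac{\pl}{\pl y_i}$. We complete $w$ to an oriented orthonormal basis $w,e_1,\ldots,e_n$ of $P$ (up to sign), so that
\[
 \Phi(P)=\pm(\iota_w\Phi)(e_1,\ldots,e_n).
\]
It therefore suffices to show that $|(\iota_w\Phi)(e_1,\ldots,e_n)|<1$ for every orthonormal $n$-tuple.

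\textbf{Step 1 (the contracted form).} Since each term of $\Phi$ has exactly one $\dd y$-factor, contracting with $w$ removes it. From \eqref{eq:Phi-s},
\[
 \iota_w\Phi=\Bigl(\prod_{j}c_j\Bigr)^{-1}\Bigl(s_0c_0w_0\,\dd x_1\w\cdots\w\dd x_n+\sum_{i=1}^n(-1)^i s_ic_iw_i\,\dd x_0\w\widehat{\dd x_i}\Bigr).
\]
This is an $n$-form on the horizontal $\BR^{n+1}$ alone. Hence its value on $e_1\w\cdots\w e_n$ equals its value on the horizontal projections of the $e_k$.

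\textbf{Step 2 (the comass bound).} Every $n$-form on $\BR^{n+1}$ is simple, being the Hodge dual of a $1$-form. Its comass therefore equals its Euclidean norm,
\[
 \Bigl(\prod_j c_j\Bigr)^{-1}\Bigl(\sum_i s_i^2c_i^2w_i^2\Bigr)^{1/2}\le \Bigl(\prod_j c_j\Bigr)^{-1}\max_i s_ic_i .
\]
Orthogonal projection does not increase the mass of a simple $n$-vector, since $|\pi e_1\w\cdots\w\pi e_n|\le\prod_k|\pi e_k|\le1$ by Hadamard. Consequently
\[
 |\Phi(P)|\le\max_i\, s_i\Big/\prod_{j\ne i}c_j .
\]

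\textbf{Step 3 (strictness).} This is the only delicate point. We need $s_i^2<\prod_{j\ne i}(1-s_j^2)$ for each $i$. Expanding the product gives
\[
 \prod_{j\ne i}(1-s_j^2)\ge 1-\sum_{j\ne i}s_j^2=s_i^2 ,
\]
and the inequality is strict as soon as at least two of the factors $s_j^2$ with $j\ne i$ are positive. This holds because $n\ge2$ and all $s_j>0$: the cross term $s_j^2s_k^2>0$ appears, and the higher-order terms do not spoil strictness, since
\[
 \prod_j(1-a_j)>1-\sum_j a_j \quad\text{for } a_j\in(0,1)
\]
and at least two indices, by induction. Thus $|\Phi(P)|<1$ and $P\notin\mathcal G^+(\Phi)$.
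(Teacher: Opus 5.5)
Your proof is correct and follows essentially the same route as the paper. Both arguments contract $\Phi$ with a unit vertical vector in $P$ and observe that the result is $\bigl(\prod_j c_j\bigr)^{-1}$ times a horizontal $n$-form of norm $\bigl(\sum_i s_i^2c_i^2w_i^2\bigr)^{1/2}$; both then bound this by $\max_i s_i/\prod_{j\ne i}c_j$ and obtain strictness from $\prod_{j\ne i}(1-s_j^2)>1-\sum_{j\ne i}s_j^2=s_i^2$, using $n\ge2$ and $s_j>0$. Your projection/Hadamard step and your induction for the strict product inequality only spell out details that the paper states directly.
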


\begin{proof}
For such a $P$, choose a unit vector
\[
 \nu = \sum_{i=0}^n a_i\frac{\pl}{\pl y_i}
 \in P\cap(\{0\}\oplus\BR^{n+1}),
\]
and set
\[
 \vol_x
 =\dd x_0\w\cdots\w\dd x_n,
 \qquad
 w=\sum_{i=0}^n
 s_ic_ia_i\frac{\pl}{\pl x_i}.
\]
Contracting \eqref{eq:Phi-s} with $\nu$ gives
\[
 \iota_\nu\Phi=\left(\prod_jc_j\right)^{-1}\iota_w\vol_x.
\]
Since $\|\iota_w\vol_x\|=|w|$, we obtain
\begin{align*}
    \|\iota_\nu\Phi\|
    &=\left(\prod_jc_j\right)^{-1} \left(\sum_{i=0}^n s_i^2c_i^2a_i^2\right)^{1/2}
    = \left( \sum_{i=0}^n \left(\frac{s_i}{\prod_{j\ne i}c_j}\right)^2a_i^2 \right)^{1/2} \leq\max_i\frac{s_i}{\prod_{j\ne i}c_j}.
\end{align*}
Since $n\geq2$ and $s_j>0$, for every $i$,
\[
 \prod_{j\ne i}c_j^2
 =\prod_{j\ne i}(1-s_j^2)
 >1-\sum_{j\ne i}s_j^2=s_i^2.
\]
It follows that $\|\iota_\nu\Phi\|<1$ and thus $P\notin\mathcal G^+(\Phi)$.
\end{proof}

\begin{lem}
\label{lem:one-normal-calibrated}
 Let $\Phi$ be the form in
\eqref{eq:Phi-s}.  Then
\[
 \mathcal G^+(\Phi)
 =\{Q_+,Q_-\},
\]
where
\begin{align*}
 Q_+&=\spn^+
   \left\{c_i\frac{\pl}{\pl x_i}+s_i\frac{\pl}{\pl y_i}:
   0\leq i\leq n\right\},\\
 Q_-&=-\spn^+
   \left\{c_i\frac{\pl}{\pl x_i}-s_i\frac{\pl}{\pl y_i}:
   0\leq i\leq n\right\}.
\end{align*}
Here the minus sign in front of the second oriented span reverses its
orientation.
\end{lem}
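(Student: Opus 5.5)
The plan has three steps: show that both $Q_\pm$ are calibrated, show via Lemma~\ref{lem:nontransverse-calibrated} that any calibrated plane is a graph $\Gamma_B$, and then use the equality cases in the chain of inequalities from Proposition~\ref{prop:Phi-comass} to force $B=\pm\operatorname{diag}(s_i/c_i)$.

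\textbf{Step 1: $Q_\pm$ are calibrated.} The plane $Q_+$ is the graph $\Gamma_{B_+}$ with $B_+=\operatorname{diag}(s_0/c_0,\ldots,s_n/c_n)$. Its graph frame $\frac{\pl}{\pl x_i}+\frac{s_i}{c_i}\frac{\pl}{\pl y_i}$ is a positive multiple of $c_i\frac{\pl}{\pl x_i}+s_i\frac{\pl}{\pl y_i}$. The end of the proof of Proposition~\ref{prop:Phi-comass} already records $\Phi(\Gamma_{B_+})=1$. For $Q_-$, take $B_-=-B_+$. The graph formula \eqref{eq:graph-formula}, applied to $\Phi$ in the form used in that proof, gives a numerator $\tr(DB_-)=-\tr(DB_+)$ and the same denominator. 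Hence $\Phi(\Gamma_{B_-})=-1$, so the reversed orientation $Q_-=-\Gamma_{B_-}$ gives $\Phi(Q_-)=1$.

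\textbf{Step 2: reduction to graphs.} Let $P\in\mathcal G^+(\Phi)$. By Lemma~\ref{lem:nontransverse-calibrated}, $P$ is transverse to $\{0\}\oplus\BR^{n+1}$. Therefore $P=\pm\Gamma_B$ for a unique $(n+1)\times(n+1)$ matrix $B$, where the sign records the orientation relative to projection onto $\BR^{n+1}\oplus\{0\}$. The condition $\Phi(P)=1$ then says $|\Phi(\Gamma_B)|=1$, so every inequality in the chain of Proposition~\ref{prop:Phi-comass} must be an equality.

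\textbf{Step 3: equality analysis.} Equality in the second inequality is equality in Lemma~\ref{lem:product} with $v_i=s_i>0$ and $u_i=c_ib_i$. Since $n\geq2$, the lemma gives $c_ib_i=\pm s_i$ with a common sign, and $b_i\ge0$ forces $b_i=s_i/c_i$. This must hold in the common nonincreasing relabeling, which is consistent because $s\mapsto s\sqrt{1-s^2}$ and $s\mapsto s/\sqrt{1-s^2}$ order the indices compatibly. Two points need checking here, since $s_ic_i$ is not monotone in $s_i$ on all of $[0,1]$:
\begin{itemize}
\item[(a)] Two $s_i$ can share the value $s_ic_i$ only if $s_i^2+s_j^2=1$, which is excluded as in Lemma~\ref{lem:von-neumann-equality}.
\item[(b)] At most one $s_i$ can exceed $1/\sqrt2$.
\end{itemize}
I expect the cleanest route is to observe that $\sum s_i^2=1$ with $n\ge2$ forces $s_i^2+s_j^2<1$, so $s_i^2c_i^2-s_j^2c_j^2=(s_i^2-s_j^2)(1-s_i^2-s_j^2)$ has the sign of $s_i-s_j$. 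Hence the two orderings agree.

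Equality in the first (von Neumann) step then places us exactly under the hypotheses of Lemma~\ref{lem:von-neumann-equality}, so $B=\pm B_+$. Matching the sign of $\Phi(\Gamma_B)$ with the orientation sign of $P$ yields $P=Q_+$ or $P=Q_-$.

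The main obstacle I expect is this bookkeeping of orderings and signs between the von Neumann step and Lemma~\ref{lem:product}. The key inequalities themselves are already proved.
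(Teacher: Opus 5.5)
Your proposal is correct and follows the paper's proof essentially step for step. You use Lemma~\ref{lem:nontransverse-calibrated} to reduce to $P=\pm\Gamma_B$, then force equality in Lemma~\ref{lem:product} to get $b_i=s_i/c_i$, and then apply Lemma~\ref{lem:von-neumann-equality} and match signs to get $Q_\pm$. Your explicit verification that $Q_\pm$ are calibrated, and your ordering check via $(s_i^2-s_j^2)(1-s_i^2-s_j^2)$, simply spell out what the paper leaves implicit.
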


\begin{proof}
By Lemma~\ref{lem:nontransverse-calibrated}, any \(P\in\mathcal G^+(\Phi)\) must be \(\pm\Gamma_B\)
for some linear map $B:\BR^{n+1}\to\BR^{n+1}$. Since \(|\Phi(\Gamma_B)|=1\), every inequality in the proof of
Proposition~\ref{prop:Phi-comass} is an equality.

Writing \(b_i\) for the singular values of \(B\), equality in
Lemma~\ref{lem:product} gives
\[
b_i=\frac{s_i}{c_i}.
\]
Lemma~\ref{lem:von-neumann-equality} therefore yields
\[
B=\pm\operatorname{diag}\!\left(\frac{s_i}{c_i}\right).
\]
For the positive sign, \(\Gamma_B=Q_+\) and \(\Phi(\Gamma_B)=1\), so \(P=Q_+\). For the negative sign,
\(\Phi(\Gamma_B)=-1\), so \(P=Q_-\).
\end{proof}

\subsection{Calibrated Planes of \texorpdfstring{$\Ta$}{Theta}}\label{sec:characterization}

Write $A=\dd F|_x$ and $\Gamma_A=\graph A$, oriented by projection to the domain.

\begin{prop}\label{prop:strict-calibrated-plane}
If $\CS(\ld)<1$, then $\Gamma_A$ is the unique calibrated
oriented $n$-plane of $\Ta$.
\end{prop}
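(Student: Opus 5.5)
The plan is to lift the problem to the auxiliary $(n+1)$-form $\Phi$ of \eqref{eq:Phi-tilde} through the contraction identity \eqref{eq:contraction-identity}, and then use the classification of its calibrated planes in Lemma~\ref{lem:one-normal-calibrated}. Work under the standing Assumption: $m=n$ and all $\ld_i>0$. Then $s_i,c_i\in(0,1)$ for $1\le i\le n$. The hypothesis $\CS(\ld)<1$ gives $s_0>0$, and $\CS(\ld)>0$ gives $c_0>0$. So all the hypotheses of Section~\ref{sec:cali-comp-form} hold, provided $n\ge2$.

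\emph{Degenerate cases.} I dispose of these first.
\begin{itemize}
\item If $n=1$, then $\Ta$ is a $1$-form on $\BR^2$ of comass one. Its calibrated oriented lines are exactly the one dual to $\Ta$, and $\Ta(\Gamma_A)=1$ by \eqref{eq:graph-formula}, so uniqueness is immediate.
\item If $A=0$, then $\Ta=\dd x_1\w\cdots\w\dd x_n$, whose unique calibrated plane is $\BR^n\oplus\{0\}=\Gamma_A$.
\item If $0<\rank A<n$, I would split off the kernel directions, where $\Ta$ factors as $\dd x$ of the kernel wedged with the reduced form, and argue factor by factor. I expect this to be routine but fiddly, and I would treat it only briefly.
\end{itemize}

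\emph{Main argument.} Let $P$ be an oriented $n$-plane in $\BR^n\oplus\BR^n$ with $\Ta(P)=1$, and set $\widetilde P=W_0\w P$, an oriented $(n+1)$-plane. Since $W_0$ is a unit vector orthogonal to $P$, \eqref{eq:contraction-identity} gives
\[
\Phi(\widetilde P)=(\iota_{W_0}\Phi)(P)=\Ta(P)=1 .
\]
As $\Phi$ has comass one by Proposition~\ref{prop:Phi-comass}, this means $\widetilde P\in\mathcal G^+(\Phi)$. Lemma~\ref{lem:one-normal-calibrated} then forces $\widetilde P\in\{Q_+,Q_-\}$.

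\emph{Excluding $Q_-$.} Every vector of $Q_-$ has the form $\sum_i t_i(c_i\pl_{x_i}-s_i\pl_{y_i})$. For $W_0=c_0\pl_{x_0}+s_0\pl_{y_0}$ to be of this form, the $x_0$ component forces $t_0=1$, and then the $y_0$ component gives $-s_0=s_0$. This contradicts $s_0>0$, so $W_0\notin Q_-$ and hence $\widetilde P=Q_+$.

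\emph{Identifying $P$.} The plane $P$ is the orthogonal complement of $W_0$ inside $Q_+$. The vectors $c_i\pl_{x_i}+s_i\pl_{y_i}$ are mutually orthogonal, and the $i=0$ vector is exactly $W_0$. So the complement is spanned by the $i\ge1$ vectors. These are, up to positive scaling by $c_i$, the frame $\pl_{x_i}+\ld_i\pl_{y_i}$ of $\Gamma_A$, so $P=\Gamma_A$ as unoriented planes.

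\emph{Orientation.} Both $Q_+$ and $W_0\w\Gamma_A$ are oriented by the ordered frame with $W_0$ first, followed by $c_i\pl_{x_i}+s_i\pl_{y_i}$ for $i\ge1$. Hence $P=\Gamma_A$ as oriented planes, which is consistent with $\Ta(\Gamma_A)=1$.

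\emph{Main obstacle.} The step I expect to need the most care is the first one: checking that the interior-product convention in \eqref{eq:contraction-identity} matches the orientation $W_0\w P$, so that $\Ta(P)=1$ really yields $\Phi(\widetilde P)=+1$ and not $-1$. A sign error here would swap the roles of $Q_+$ and $Q_-$. I would verify the convention directly on $P=\Gamma_A$, using $\Ta(\Gamma_A)=1$ from Section~\ref{sec:reduced-form} together with $\Phi(Q_+)=1$.
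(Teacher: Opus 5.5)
Your proposal is correct and matches the paper's proof in every step: lift $P$ to $W_0\wedge P$ via the contraction identity, apply Lemma~\ref{lem:one-normal-calibrated}, rule out $Q_-$ because $W_0\notin Q_-$ when $s_0>0$, and recover $P=\Gamma_A$ from $Q_+=W_0\wedge\Gamma_A$. The rank-deficient cases you treat separately are covered in the paper by the standing full-rank assumption of Section~\ref{sec:reduced-form}, and your sign worry does not arise: with the usual convention $\iota_W\Phi(v_1,\ldots,v_n)=\Phi(W,v_1,\ldots,v_n)$ one has $\Phi(W_0\wedge P)=(\iota_{W_0}\Phi)(P)$ directly.
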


\begin{proof}
The assertion is easy when $n=1$. Assume that $n\geq2$, and consider
the $(n+1)$-form $\Phi$ from Section~\ref{sec:criterion}. If $P$ is a
calibrated $n$-plane, then \eqref{eq:contraction-identity} gives
\[
 \Phi(W_0\w P)=\Ta(P)=1.
\]
Here $\CS(\ld)<1$ implies $s_0>0$, as required in
Lemma~\ref{lem:one-normal-calibrated}.

By Lemma~\ref{lem:one-normal-calibrated}, $W_0\w P$ is either $Q_+$
or $Q_-$, where $Q_+=W_0\w \Gamma_A$.
The intersection of $Q_-$ with the $(x_0,y_0)$-plane is spanned by
\[
 c_0\frac{\pl}{\pl x_0}-s_0\frac{\pl}{\pl y_0}.
\]
Since $s_0,c_0>0$, this line does not contain $W_0$.
Thus $W_0\w P$ cannot be $Q_-$, and consequently
$W_0\w P=Q_+=W_0\w\Gamma_A$. Hence $P=\Gamma_A$.
\end{proof}

\begin{rmk}\label{rem:high-rank-boundary}
A careful examination of the equality cases in the preceding estimates
shows that, when $n\geq3$ and $\CS(\ld)=1$, the only calibrated
planes of $\Ta$ are $\Gamma_A$ and $-\Gamma_{-A}$.
\end{rmk}

\begin{rmk}\label{rem:rank-two-boundary}
If $n=2$ and $\CS(\ld)=1$, then $\ld_1\ld_2=1$.
Thus $c_2=s_1$ and $c_1=s_2$, and the form becomes
\[
 \Ta=\dd y_1\w\dd x_2-\dd y_2\w\dd x_1
 =\dd x_1\w\dd y_2-\dd x_2\w\dd y_1;
\]
see also \cite{TsaiWang}*{(2.12)}.
After an orthogonal change of coordinates, this is the standard K\"ahler calibration on $\mathbb{R}^4$, and its calibrated planes are precisely the complex lines.
\end{rmk}

\section{A Rigidity Phenomenon} \label{sec:single-contact-rigidity}

Let $\Om\subset\BR^n$ be an open and connected domain, and let
$F:\Om\to\BR^m$ be a smooth map whose
graph $\Gamma_F$ is minimal. We use $\CS$ and $\Pi$ from \eqref{eq:S-geometric}.
All derivatives below use the induced metric on $\Gamma_F$, with
$\Delta=\operatorname{div}\nabla$.
For orthonormal frames of the tanget and normal
$e_i,e_\alpha$, write $\sdf=\{h_{\alpha ij}\}_{n+1\leq\alpha\leq n+m,1\leq i,j\leq n}$ for the
second fundamental form.

The following identities follow from the parallel two-tensor formulas of
Tsui and Wang \cite{TsuiWang2004}*{Proposition~3.2 and p.~1121},
applied to $\langle\Pi\cdot,\cdot\rangle$ on the minimal
graph and traced over the tangent space.

\begin{lem}\label{lem:rigidity-differential-identities}
Suppose that $\CS\leq1$ on $\Om$.
If $\CS(x_0)=1$ at some $x_0\in\Om$, then
$\CS\equiv1$.
\end{lem}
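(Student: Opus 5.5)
The plan is a strong maximum principle for the function $\CS$ on the minimal graph $\Gamma_F$. Connectedness of $\Om$ then gives the conclusion. I first trace the Tsui--Wang parallel two-tensor formula for $S=\langle\Pi\cdot,\cdot\rangle$. On a minimal submanifold of Euclidean space this tensor is the restriction of a parallel ambient tensor. In an orthonormal tangent frame $e_i$ with normal frame $e_\alpha$, and with $h_{\alpha ij}$ the second fundamental form, the Codazzi equations and the minimality condition $\sum_i h_{\alpha ii}=0$ should give
\[
 \Delta \CS=\Delta\sum_i S(e_i,e_i)
 =2\sum_{\alpha,i,k} h_{\alpha ik}\bigl(\langle\Pi e_\alpha,e_\alpha\rangle\,h_{\alpha ik}\bigr)
 -2\sum_{i,k}\Bigl(\sum_\alpha h_{\alpha ik}\ldots\Bigr)S(e_k,e_k)+\cdots .
\]
In the singular-value frames this becomes the expected form
\[
 \Delta\CS=2\sum_{\alpha,i,k}h_{\alpha ik}^2\,\bigl(\text{normal weight of }e_\alpha-\text{tangential weight of }e_k\bigr).
\]
Concretely, I will use the adapted frames from Tsui--Wang. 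There the tangent vectors are $e_i=(\partial_{x_i}+\ld_i\partial_{y_i})/\sqrt{1+\ld_i^2}$ and the normals are $e_{n+i}=(-\ld_i\partial_{x_i}+\partial_{y_i})/\sqrt{1+\ld_i^2}$. With these frames the trace identity reads
\[
 \Delta\CS=2\sum_{i,j,k}h_{n+i,jk}^2\,\frac{1-\ld_i^2}{1+\ld_i^2}\ \ (\text{up to the exact coefficient from the formula}),
\]
with $|\nabla\CS|^2$-type terms controlled by $\sum h^2$.

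The key step is to show that wherever $\CS\le1$ the operator satisfies a differential inequality of the form
\[
 \Delta(1-\CS)+\langle V,\nabla(1-\CS)\rangle\le C\,(1-\CS),
\]
with locally bounded $C$ and vector field $V$. I will rewrite the coefficient of $h_{n+i,jk}^2$ using $1-\ld_i^2\cdot(\ldots)$ in terms of $1-\CS$ plus a nonnegative quantity. The relevant algebraic fact is that
\[
 1-\CS=\sum_i\frac{1}{1+\ld_i^2}-(n-1).
\]
The coefficients such as $1-\ld_j^2\ld_k^2$ appearing for mixed indices $j\ne k$ dominate $1-\CS$, in the same spirit as the inclusion $\{\CS\le1\}\subset\{\ld_j\ld_k\le1\}$ from the introduction. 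With the inequality in hand, $1-\CS\ge0$ is a nonnegative supersolution of a linear elliptic inequality that vanishes at $x_0$. Hopf's strong maximum principle (the Harnack-type version for supersolutions with a zeroth-order term, applied to $u=1-\CS\ge0$) then shows that $u$ vanishes on a neighborhood of $x_0$. The zero set of $u$ is closed, and this argument shows it is also open, so connectedness gives $\CS\equiv1$.

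The main obstacle is the algebraic sign step. It requires grouping the quadratic form in $h$ so that every term either carries a factor $1-\CS$ or is nonnegative, or is a gradient term $\nabla\CS$ that can be absorbed into $V$. If a clean grouping fails, my fallback is to treat $\CS$ through the auxiliary form $\Phi$. The comass $\|\Ta\|$ equals one on the region $\CS\le1$, and the contraction identity \eqref{eq:contraction-identity} links $\Ta$ to $\Phi$. I would then use the equality structure at $\CS=1$ to show that the negative plane $-\Gamma_{-A}$ is calibrated, and derive the differential inequality from the second variation of the function $\Ta(-\Gamma_{-A})=1-2\CS$ along the graph.
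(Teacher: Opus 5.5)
Your overall strategy matches the paper's. You trace the Tsui--Wang formula for $\langle\Pi\cdot,\cdot\rangle$, obtain a linear elliptic inequality for $u=1-\CS\ge0$ with the $\nabla\CS$-terms treated as drift, and conclude by the strong maximum principle and connectedness. The inequality you aim for is in fact true. However, the step you flag as the main obstacle is the entire content of the proof, and you only handle the easy part of it. In an SVD frame the identity is
\[
 \Delta\CS=2\sum_{\alpha,i,k}\Bigl(\frac{1}{1+\ld_{\alpha-n}^2}-\frac{\ld_i^2}{1+\ld_i^2}\Bigr)h_{\alpha ik}^2,
\]
not the uniform weight $\frac{1-\ld_i^2}{1+\ld_i^2}$ in your display. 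For $\alpha\neq n+i$ the weight equals $1-\frac{\ld_{\alpha-n}^2}{1+\ld_{\alpha-n}^2}-\frac{\ld_i^2}{1+\ld_i^2}\ge1-\CS$. That is your remark about $1-\ld_j^2\ld_k^2$, and it is immediate. The paired terms $\alpha=n+i$ instead carry the weight $\frac{1-\ld_i^2}{1+\ld_i^2}$. This weight is negative whenever $\ld_i>1$, which is compatible with $\CS\le1$ (e.g.\ rank two with $\ld_1\ld_2=1$, $\ld_1>1$), and it contains no factor $1-\CS$. These terms cannot be controlled one at a time, and your proposal gives no mechanism for them. The fallback through $\Phi$ and $-\Gamma_{-A}$ is not an argument; note also that $\Ta(-\Gamma_{-A})=2\CS-1$, not $1-2\CS$.

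The missing idea is to treat, for each fixed $k$, the components $a_i:=h_{(n+i)ik}$ jointly. The point is that $\nabla_k\CS=2\sum_i\frac{\ld_i}{1+\ld_i^2}a_i$ is linear in exactly these components. Using $\sum_i\frac{\ld_i^2}{1+\ld_i^2}=\CS$, rewrite the paired block as $(1-\CS)\sum_i\frac{a_i^2}{1+\ld_i^2}+\sum_i\frac{\ld_i^2}{1+\ld_i^2}\bigl(\sum_j\frac{a_j^2}{1+\ld_j^2}-a_i^2\bigr)$. Then expand $0\le\sum_{j\ne i}\frac{(a_j+\ld_i\ld_ja_i)^2}{1+\ld_j^2}$ to get
\[
 \sum_j\frac{a_j^2}{1+\ld_j^2}-a_i^2\ \ge\ (1-\CS)\ld_i^2a_i^2-\ld_ia_i\nabla_k\CS .
\]
The leftover term $-\sum_k\bigl(\sum_i\frac{\ld_i^3}{1+\ld_i^2}a_i\bigr)\nabla_k\CS$ is a drift term. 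The paper identifies its coefficient as $-\nabla_k\log(\ast\Omega)-\frac12\nabla_k\CS$, so the drift is smooth; in your formulation any locally bounded coefficient would do. Only after this step does one reach $\frac12\Delta\CS\ge\langle\nabla\log(\ast\Omega),\nabla\CS\rangle+\frac12|\nabla\CS|^2$ plus nonnegative $(1-\CS)$-weighted terms, to which the maximum principle applies. Without this completion-of-squares step, the differential inequality on which your whole argument rests remains unproved.
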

\begin{proof}
The parallel two-tensor formulas of Tsui and Wang
\cite{TsuiWang2004}*{Proposition~3.2 and p.~1121},
applied to $\langle\Pi\cdot,\cdot\rangle$ on the minimal
graph and traced over the tangent space, give
\begin{equation}\label{eq:rigidity-projection}
\begin{aligned}
 \nabla_k\CS
 &=2\sum_{\alpha,i}
   \langle\Pi e_\alpha,e_i\rangle h_{\alpha ik},\\
 \Delta\CS
 &=2\sum_{\alpha,\beta,i,k}
   \langle\Pi e_\alpha, e_\beta\rangle
   h_{\alpha ik}h_{\beta ik}
   -2\sum_{\alpha,i,j,k}
   \langle\Pi e_i,e_j\rangle
   h_{\alpha ik}h_{\alpha jk}.
\end{aligned}
\end{equation}

At any fixed point, let $r=\rank \dd F$ and choose an SVD frame, with nonzero singular values $\lambda_1,\ldots,\lambda_r$.  For \(i > r\), set $\lambda_i=0$.
In this frame, \eqref{eq:rigidity-projection} becomes
\begin{equation}\label{eq:rigidity-svd}
 \nabla_k\CS
 =2\sum_{i=1}^r
   \frac{\lambda_i}{1+\lambda_i^2}\,h_{(n+i)ik},
 \qquad
 \Delta\CS
 =2\sum_{\alpha,i,k}
   \left(
     \frac{1}{1+\lambda_{\alpha - n}^2}
     -\frac{\lambda_i^2}{1+\lambda_i^2}
   \right)h_{\alpha ik}^2.
\end{equation}
We denote by $\ast\Omega$ the Jacobian of the projection
from the graph $\Gamma_F$ onto $\Om$. Then
\[
 \ast\Omega
 =\bigl(\det(I+(\dd F)^T\dd F)\bigr)^{-1/2}
 =\prod_{i=1}^r(1+\lambda_i^2)^{-1/2}>0.
\]
By the computation in \cite{wang2002long}*{Section~3},
\begin{equation}\label{eq:graph-jacobian-gradient}
 \nabla_k\log(\ast\Omega)
 =-\sum_{i=1}^r\lambda_i h_{(n+i)ik}.
\end{equation}
Together with \eqref{eq:rigidity-svd}, this gives
\begin{align} \label{eqn:gradient-terms}
    \sum_{i=1}^r\frac{\lambda_i^3}{1+\lambda_i^2}h_{(n+i)ik} = \sum_{i=1}^r\lambda_i h_{(n+i)ik} -\sum_{i=1}^r\frac{\lambda_i}{1+\lambda_i^2}h_{(n+i)ik} =-\nabla_k\log(\ast\Omega)-\frac12\nabla_k\CS.
\end{align}

For \eqref{eq:rigidity-svd} of \(\Delta\CS\), we split the summation into two parts: \(\alpha = n+i\) (for \(i=1,\ldots,r\)), and \(\alpha\neq n+i\).  If \(\alpha\neq n+i\), note that
\begin{align} \label{eqn:unpaired-coefficient}
    \frac{1}{1+\lambda_{\alpha-n}^2} - \frac{\lambda_i^2}{1+\lambda_i^2}
    = 1-\frac{\lambda_{\alpha-n}^2}{1+\lambda_{\alpha-n}^2} - \frac{\lambda_i^2}{1+\lambda_i^2} .
\end{align}

For \(\alpha = n+i\),
\begin{align}
    &\quad \sum_{k=1}^n\biggl( \sum_{i=1}^r\frac{h_{(n+i)ik}^2}{1+\lambda_i^2} - \sum_{i=1}^r \frac{\lambda_i^2h_{(n+i)ik}^2}{1+\lambda_i^2} \biggr) \notag \\
    &= (1-\CS)\sum_{k=1}^n\sum_{i=1}^r \frac{h_{(n+i)ik}^2}{1+\lambda_i^2} + \sum_{k=1}^n\sum_{i=1}^r \frac{\lambda_i^2}{1+\lambda_i^2} \biggl( \sum_{j=1}^r\frac{h_{(n+j)jk}^2}{1+\lambda_j^2} - h_{(n+i)ik}^2 \biggr) \notag \\
    \begin{split} \label{eqn:paired-terms}
    &= (1-\CS) \sum_{k=1}^n\sum_{i=1}^r \frac{h_{(n+i)ik}^2}{1+\lambda_i^2} - \sum_{k=1}^n\biggl( \sum_{i=1}^r \frac{\lambda_i^3}{1+\lambda_i^2}h_{(n+i)ik} \biggr)\nabla_k\CS\\
    &\quad + \sum_{k=1}^n\sum_{i=1}^r \frac{\lambda_i^2}{1+\lambda_i^2} \biggl( \sum_{j=1}^r\frac{h_{(n+j)jk}^2}{1+\lambda_j^2} - h_{(n+i)ik}^2 + \lambda_i h_{(n+i)ik}\nabla_k\CS \biggr) .
    \end{split}
\end{align}
By \eqref{eqn:gradient-terms}, the second term in \eqref{eqn:paired-terms} is
\begin{align} \label{eqn:paired-part2}
    - \sum_{k=1}^n\biggl( \sum_{i=1}^r \frac{\lambda_i^3}{1+\lambda_i^2}h_{(n+i)ik} \biggr)\nabla_k\CS
    &= \langle\nabla\log(\ast\Omega),\nabla\CS\rangle + \frac12 |\nabla\CS|^2 .
\end{align}
For the last term in \eqref{eqn:paired-terms}, for any fixed \(i\in\{1,\ldots,r\}\) and \(k\in\{1,\ldots,n\}\),
\begin{align*}
    0 &\leq \sum_{\substack{j=1\\j\neq i}}^r \frac{1}{1+\lambda_j^2} \bigl( h_{(n+j)jk}^2+2\lambda_i\lambda_j h_{(n+i)ik}h_{(n+j)jk} + \lambda_i^2\lambda_j^2h_{(n+i)ik}^2 \bigr)\\
    &= \sum_{j=1}^r\frac{h_{(n+j)jk}^2}{1+\lambda_j^2} - \frac{h_{(n+i)ik}^2}{1+\lambda_i^2} + 2\lambda_i h_{(n+i)ik} \sum_{j=1}^r\frac{\lambda_j}{1+\lambda_j^2}h_{(n+j)jk} - \frac{2\lambda_i^2}{1+\lambda_i^2}h_{(n+i)ik}^2\\
    &\quad + \lambda_i^2(\CS-1)h_{(n+i)ik}^2 + \lambda_i^2 \left(1-\frac{\lambda_i^2}{1+\lambda_i^2}\right) h_{(n+i)ik}^2.
\end{align*}
It follows that
\begin{align} \label{eqn:paired-part3}
    \sum_{j=1}^r\frac{h_{(n+j)jk}^2}{1+\lambda_j^2} - h_{(n+i)ik}^2 + \lambda_i h_{(n+i)ik}\nabla_k\CS
    &\geq (1-\CS)\lambda_i^2h_{(n+i)ik}^2 .
\end{align}

By using \eqref{eqn:unpaired-coefficient}, \eqref{eqn:paired-terms}, \eqref{eqn:paired-part2} and \eqref{eqn:paired-part3}, \eqref{eq:rigidity-svd} leads to
\begin{align} \label{eqn:S-general} \begin{split}
    \frac12\Delta\CS &\geq (1-\CS) \sum_{k=1}^n\sum_{i=1}^r \frac{1+\lambda_i^4}{1+\lambda_i^2}h_{(n+i)ik}^2 + \sum_{\substack{\alpha,i,k\\ \alpha\neq n+i}} \Bigl( 1-\frac{\lambda_{\alpha-n}^2}{1+\lambda_{\alpha-n}^2} - \frac{\lambda_i^2}{1+\lambda_i^2} \Bigr) h_{\alpha ik}^2 \\
    &\quad + \langle\nabla\log(\ast\Omega),\nabla\CS\rangle + \frac12 |\nabla\CS|^2.
\end{split} \end{align}
If \(\CS\leq 1\), for \(\alpha\neq n+i\),
\begin{equation} \label{eqn:CoefficientGreaterThanZero}
    1-\frac{\lambda_{\alpha-n}^2}{1+\lambda_{\alpha-n}^2} - \frac{\lambda_i^2}{1+\lambda_i^2}
    \geq 1-\CS \geq 0.
\end{equation}
It follows that \(\frac12\Delta\CS \geq \langle\nabla\log(\ast\Omega),\nabla\CS\rangle + \frac12 |\nabla\CS|^2\).  If \(\CS\) achieves \(1\) in some interior point, the strong maximum principle \cite{GilbargTrudinger2001}*{Chapter~3}, applied to \eqref{eqn:S-general}, gives $\CS\equiv1$.
\end{proof}

With on Lemma~\ref{lem:rigidity-differential-identities}, we study what happens if \(\CS\equiv1\) on \(\Om\).  At where \(\rank \dd F \leq 1\), \(\CS < 1\).  Hence, there are two possibilities for \(\CS \equiv 1\): either \(\rank \dd F \geq 3\) somewhere, or \(\rank \dd F \equiv 2\).

\begin{prop} \label{thm:single-contact-rigidity} 
Suppose $\CS\equiv1$ on $\Om$.
If $\rank \dd F(x_0)\geq3$ at some $x_0\in\Om$, then $F(x)$ must be an affine function.
\end{prop}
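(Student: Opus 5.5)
Since $\CS\equiv1$ on $\Om$, we have $\nabla\CS\equiv0$ and $\Delta\CS\equiv0$. Feeding this into \eqref{eqn:S-general} forces every nonnegative term on its right-hand side to vanish pointwise. Nonnegativity of each term is guaranteed by \eqref{eqn:CoefficientGreaterThanZero} and by \eqref{eqn:paired-part3} with $1-\CS=0$. The plan is to extract vanishing of the second fundamental form from these equality conditions wherever $\rank\dd F\geq3$. Then, using connectedness and analyticity of minimal graphs, we conclude that $F$ is affine on all of $\Om$.

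\textbf{Step 1 (unpaired components).} Fix a point where $r=\rank\dd F\geq3$, and take $\alpha\neq n+i$ with $\alpha-n=a$. The coefficient in \eqref{eqn:CoefficientGreaterThanZero} equals
\[
1-\frac{\ld_a^2}{1+\ld_a^2}-\frac{\ld_i^2}{1+\ld_i^2}=\sum_{l\neq a,i}\frac{\ld_l^2}{1+\ld_l^2},
\]
with the convention $\ld_l=0$ for $l>r$. Because $r\geq3$, at least one index $l\le r$ differs from both $a$ and $i$. Hence this coefficient is strictly positive, and so $h_{\alpha ik}=0$ for all $k$ whenever $\alpha\neq n+i$. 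Combined with the symmetry $h_{\alpha ik}=h_{\alpha ki}$, this leaves only the components $h_{(n+i)ik}$ with $i\le r$. Symmetry then gives $h_{(n+i)ik}=h_{(n+i)ki}$. For $k\neq i$, the latter is an unpaired component (its normal index $n+i$ does not match its first tangent index $k$), so it vanishes. Thus only the diagonal components $h_{(n+i)ii}$ survive.

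\textbf{Step 2 (paired components).} We now use the equality case of the inequality leading to \eqref{eqn:paired-part3}, weighted by $\ld_i^2/(1+\ld_i^2)>0$. Equality forces
\[
h_{(n+j)jk}+\ld_i\ld_j h_{(n+i)ik}=0\quad\text{for all } j\neq i,\ j,i\le r.
\]
Take $k=i$. Since $h_{(n+j)ji}=0$ for $j\neq i$ by Step 1, we get $\ld_i\ld_j h_{(n+i)ii}=0$, and therefore $h_{(n+i)ii}=0$. Alternatively, minimality gives $\sum_k h_{\alpha kk}=0$, and with only diagonal terms left this directly yields $h_{(n+i)ii}=0$. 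Either way, $\sdf=0$ at every point where $\rank\dd F\geq3$.

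\textbf{Step 3 (globalization).} The set where $\rank\dd F\geq3$ is open and contains $x_0$. On it $\sdf\equiv0$, so the graph is a piece of an affine plane there, i.e.\ $F$ is affine on a neighborhood of $x_0$. Solutions of the minimal surface system are real-analytic, and $\Om$ is connected, so unique continuation shows $F$ is affine on all of $\Om$.

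The main obstacle is the equality analysis of Step 2. One must check that the equality conditions from \eqref{eqn:paired-part3} are genuinely implied by $\Delta\CS=0$ and $\nabla\CS=0$, with the correct factors and no cancellations among the terms. One must also handle frame dependence when singular values coincide or vanish at boundary points of the rank-$\geq3$ set. I would first try to make Step 2 unnecessary by using minimality of the trace, as noted above, which is cleaner. The analyticity route in Step 3 then avoids any analysis of points where the rank drops.
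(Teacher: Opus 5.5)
Your proposal is correct and follows the paper's proof. With $\CS\equiv1$, the coefficient of each unpaired $h_{\alpha ik}^2$ in \eqref{eqn:S-general} equals $\sum_{l\ne a,i}\ld_l^2/(1+\ld_l^2)>0$ (with $a=\alpha-n$) once $\rank\dd F\ge3$, so those components vanish; symmetry then leaves only the diagonal entries $h_{(n+i)ii}$, minimality kills these, and unique continuation (the paper cites Aronszajn, you use real-analyticity) extends the result from the open set where $\rank\dd F\ge3$ to all of $\Om$.

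Your alternative Step~2, via the equality case behind \eqref{eqn:paired-part3}, is also valid for $i\le r$, but as you anticipated it is unnecessary given the trace argument.
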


\begin{proof}
By Lemma~\ref{lem:rigidity-differential-identities}, $\CS \equiv 1$ on $\mathcal D$. 
Since $\rank \dd F(x_0)\geq3$, $\rank \dd F(x) > 2$ on an open neighborhood of $x_0$.  By the unique continuation principle of Aronszajn
\cite{Aronszajn1957}, it suffices to show that $F$ is affine
on this neighborhood.  With this understood, we may assume $\rank \dd F(x)\geq3$ for all \(x\in\Om\).

At each point, choose an SVD frame as in
\eqref{eq:rigidity-svd}. Since $\nabla \CS = \Delta \CS = 0$, \eqref{eqn:S-general} and \eqref{eqn:CoefficientGreaterThanZero} give 
\[
0 \ge \sum_{\substack{\alpha,i,k\\ \alpha\neq n+i}} \Bigl( 1-\frac{\lambda_{\alpha-n}^2}{1+\lambda_{\alpha-n}^2} - \frac{\lambda_i^2}{1+\lambda_i^2} \Bigr) h_{\alpha ik}^2 \ge 0.
\]
Since $\rank \dd F \ge 3$, every coefficient in this sum is strictly positive. Consequently,
\[
    h_{\alpha i k} = 0
 \qquad\text{whenever }\alpha\ne n+i.
\]
By the symmetric of the second fundamental form, all the components vanish, except \(h_{(n+i)ii}\)'s.
By minimality,
\[
    h_{(n+i)ii} = -\sum_{\substack{1\leq j\leq n\\ j\neq i}} h_{(n+i)jj} = 0.
\]
Thus, \(\sdf\equiv0\), and \(\Gamma_F\) must be flat.
\end{proof}

\begin{prop}
\label{cor:rank-two-single-contact}
Suppose $\CS\equiv1$ and \(\rank \dd F \equiv 2\) on $\Om$.  Then, $F$ is a submersion onto an affine two-plane \(P\) in $\BR^m$; in other words, \(\Gamma_F\subset\BR^n\times P\cong\BR^{n+2}\).
\end{prop}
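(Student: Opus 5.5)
The plan is to run the same equality analysis as in Proposition~\ref{thm:single-contact-rigidity}, now with $r=2$, and to show that the second fundamental form has no components in the directions normal to the image of $\dd F$. Since $\CS\equiv1$, we have $\nabla\CS=\Delta\CS=0$ and $1-\CS=0$. Inequality \eqref{eqn:S-general} then gives
\[
0\ \ge\ \sum_{\substack{\alpha,i,k\\ \alpha\neq n+i}}\Bigl(1-\frac{\lambda_{\alpha-n}^2}{1+\lambda_{\alpha-n}^2}-\frac{\lambda_i^2}{1+\lambda_i^2}\Bigr)h_{\alpha ik}^2\ \ge\ 0 .
\]
We work in an SVD frame with $\lambda_1\lambda_2=1$ and $\lambda_j=0$ for $j\ge3$. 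The coefficient of $h_{\alpha ik}^2$ vanishes exactly when $\{\alpha-n,i\}=\{1,2\}$. For every other pair it is strictly positive: if $\alpha-n\ge3$ or $i\ge3$, it equals $1-\frac{\lambda^2}{1+\lambda^2}>0$ or $1$. Hence $h_{\alpha ik}=0$ unless either $\alpha=n+i$, or $\alpha\in\{n+1,n+2\}$ and $i\in\{1,2\}$.

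The key consequence is that $h_{\alpha ik}=0$ for all $\alpha\ge n+3$ and all $i,k$. For $i\ge3$ the paired index $\alpha=n+i$ survives the sum, but with $\lambda_i=0$ the vector $e_{n+i}$ is a normal direction orthogonal to $\BR^n\times\operatorname{im}\dd F$. I will check this in the SVD frame: $e_{n+i}=\partial_{y_i}$ for $i\ge3$, and symmetry gives $h_{(n+i)ik}=h_{(n+i)ki}$, which vanishes for $k\ne i$ by the first case. That leaves $h_{(n+i)ii}$ with $i\ge3$. Minimality gives $\sum_j h_{(n+i)jj}=0$, and $h_{(n+i)jj}=0$ for every $j\ne i$, so $h_{(n+i)ii}=0$ as well. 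The same argument handles any normal index $\alpha$ with $\alpha-n>n$, that is, directions of the target beyond the paired ones, which are automatically unpaired.

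Conclusion: the second fundamental form takes values in the normal subbundle $N_1=\spn\{e_{n+1},e_{n+2}\}$, which is the normal part of $\{0\}\times\operatorname{im}\dd F$. Equivalently, for every constant vector $v\in\BR^m$, the target derivatives of $F$ satisfy $\partial_k\partial_l F\in\operatorname{im}\dd F$. Concretely, I will translate $h_{\alpha ik}$ with $\alpha\ge n+3$ into the statement that $\partial_j\partial_k F$ has no component orthogonal to $\operatorname{im}\dd F(x)$. This uses the fact that the graph's normal directions orthogonal to $\operatorname{im}\dd F$ are exactly $\{0\}\times(\operatorname{im}\dd F)^\perp$.

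Then $E(x)=\operatorname{im}\dd F(x)$ is a smooth rank-$2$ distribution in $\BR^m$, with $\rank\equiv2$, whose derivative stays inside itself. To see this, take a local frame $\partial_1F,\partial_2F$ spanning $E$ (after reordering coordinates near a point). Then $\partial_k(\partial_jF)\in E$, so $E$ is parallel, hence constant, on the connected set $\Om$. Therefore $F(x)-F(x_0)\in E$ for all $x$, and $F$ maps into the affine plane $P=F(x_0)+E$ with surjective differential.

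I expect the main obstacle to be the identification of the components $h_{\alpha ij}$, $\alpha\ge n+3$, with the projection of $\partial^2F$ onto $(\operatorname{im}\dd F)^\perp$, done carefully with the correct frame conventions. For this I will use the explicit normal frame $e_\alpha=(-(\dd F)^T v_\alpha, v_\alpha)/|\cdot|$; for $v_\alpha\perp\operatorname{im}\dd F$ this reduces to $(0,v_\alpha)$, and then $h_{\alpha ij}\propto\langle\partial_i\partial_jF,v_\alpha\rangle$.
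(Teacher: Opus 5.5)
Your proposal is correct and follows the paper's route: equality in \eqref{eqn:S-general} forces $h_{\alpha ik}=0$ for $\alpha>n+2$, and this is then converted into constancy of the plane $\operatorname{im}\dd F$. You are in fact more careful than the paper on two points: first, the components $h_{(n+i)ik}$ with $i\ge3$ do not appear directly in the sum over $\alpha\ne n+i$, and you recover them by symmetry and, for $k=i$, by minimality; second, you make the last step explicit via $\partial_j\partial_kF\in\operatorname{im}\dd F$ rather than asserting that the frame $e_{n+3},\ldots,e_{n+m}$ is constant.
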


\begin{proof}
When $\rank \dd F=2$,
\[
    1 - \CS = \frac{1-\lambda_1^2\lambda_2^2}{(1+\lambda_1^2)(1+\lambda_2^2)}.
\]
Thus, $\lambda_1\lambda_2\equiv1$.

At each point, choose an SVD frame.
Since $\nabla\CS=\Delta\CS=0$, \eqref{eqn:S-general}  and \eqref{eqn:CoefficientGreaterThanZero} give 
\[
0 = \sum_{\substack{\alpha,i,k\\ \alpha\neq n+i}} \Bigl( 1-\frac{\lambda_{\alpha-n}^2}{1+\lambda_{\alpha-n}^2} - \frac{\lambda_i^2}{1+\lambda_i^2} \Bigr) h_{\alpha ik}^2 .
\]
By \eqref{eqn:CoefficientGreaterThanZero}, all terms are nonnegative, and the coefficients
of $h_{\alpha ik}^2$ with $\alpha> n+2$ are strictly positive.
Therefore $h_{\alpha ik}=0$ for every $\alpha>n+2$.

Geometrically, the vectors in \(\BR^m\) that are orthogonal to the image of \(\dd F\) constitutes an \(\BR^{m-2}\) vector bundle over \(\Gamma_F\).  Choose a local orthonormal frame \(e_{n+3},\ldots,e_{n+m}\) of it.  The condition $h_{\alpha ik}=0$ for every $\alpha>n+2$ implies that \(e_{n+3},\ldots,e_{n+m}\) are constant vectors, and this corollary follows.
\end{proof}

The map in Corollary~\ref{cor:rank-two-single-contact}
need not be affine, even though its image lies in an
affine two-plane.
The following example is a local graphical realization
of the complex-parabola model discussed by Joyce
\cite{Joyce2001}*{Section~10, discussion following Theorem~10.3}.
\begin{ex}
\label{ex:rank-two-nonaffine}

On a neighborhood of the origin in $\BR^2$, consider
\[
 F(x,y)=\left(1-\sqrt{1+x^2-2y},\,-x\sqrt{1+x^2-2y}\right).
\]
Direct differentiation gives $\det \dd F=1$ and
$\pl_x^2F(0,0)=(-1,0)$.
Thus $\rank \dd F=2$, $\lambda_1\lambda_2=\CS=1$,
and $F$ is not affine.

With $x=s$ and $y=t+\tfrac12(s^2-t^2)$,
its graph is parametrized by
\[
 \Psi(s,t)
 =\left(s,\,t+\tfrac12(s^2-t^2),\,t,\,s(t-1)\right).
\]
Writing the ambient coordinates as $(x,y,u,v)$ and setting
$z_1=x+iu$, $z_2=y+iv$, we obtain
\[
 z_1=s+it,\qquad
 z_2-\frac12=\frac12(z_1-i)^2.
\]
Thus the graph is a holomorphic curve, specifically
a translated complex parabola, and hence is minimal.
\end{ex}

\end{document}